\documentclass[12pt]{amsart}

\usepackage{amssymb,bm,mathrsfs}

\usepackage{enumerate}
\usepackage{tikz}
\usepackage[centering,width=6in]{geometry}
\geometry{a4paper,height=9in}
\parskip.5ex plus.1ex
\linespread{1.1}

\usepackage[colorlinks,pagebackref,pdfstartview=FitH]{hyperref}

\theoremstyle{plain}
\newtheorem{thm}{Theorem}[section]
\newtheorem{prop}[thm]{Proposition}
\newtheorem{lem}[thm]{Lemma}

\theoremstyle{definition}

\theoremstyle{remark}

\numberwithin{equation}{section}

\newcommand{\N}{\mathbb N}

\newcommand{\R}{\mathbb R}

\title{Erd\H{o}s similarity problem via bi-Lipschitz embedding}

\author{De-jun Feng}
\address{
Department of Mathematics\\
The Chinese University of Hong Kong\\
Shatin,  Hong Kong
}
\email{djfeng@math.cuhk.edu.hk}
\author{Chun-Kit Lai}
\address{Department of Mathematics\\
San Francisco State University\\
1600 Holloway Avenue, San Francisco, CA 94132
}
\email{cklai@sfsu.edu}

\author{Ying Xiong}
\address{
Department of Mathematics\\
South China  University of Technology\\
Guangzhou 510641,  Guangdong\\
People's Republic of China
}
\email{xiongyng@gmail.com}
\subjclass[2020]{28A75}
\keywords{Affine copies, Bi-Lipschitz embedding, Erd\H{o}s similarity conjecture,  measure universal}

\begin{document}

\begin{abstract}
 The Erd\H{o}s similarity conjecture asserted that an infinite set of real numbers cannot be affinely embedded into every measurable set of positive Lebesgue measure. The problem is still open, in particular for all fast decaying sequences.  In this paper, we relax the problem to the bi-Lipschitz embedding and obtain some sharp criteria about the bi-Lipschitz Erd\H{o}s similarity problem for strictly decreasing sequences.
\end{abstract}

\maketitle

\section{Introduction and Main Results}

\subsection{Background} Searching for ``copies" of  patterns of certain points inside sets of fairly big  size  has been a central problem in different branches of mathematics. Such a statement can take many different forms depending on what ``copies" people are looking for and the notion of the ``size". The most natural notion in this regard would be an affine copy.  By an {\bf affine copy} of a set $P\subset \R$, we mean a set of the form $\lambda P+ t$, where $\lambda, t\in \R$ and $\lambda\neq 0$.


\medskip

The notion of ``size" of sets takes many different forms as well, but one of the most natural choices is undoubtedly the Lebesgue measure. The earliest result about the abundance of patterns for sets of positive measure was due to Steinhaus \cite{Steinhaus}. Using the Lebesgue density theorem, Steinhaus proved that if $P$ is a finite set of real numbers, then every measurable set $E\subset \R$ of positive Lebesgue measure contains an affine copy of $P$.  Here and afterwards ``measurable''  always means ``Lebesgue measurable''.  Erd\H{o}s however believed that infinite sets cannot be that abundant. Following the notion introduced by Kolountzakis \cite{Kol97}, we say that a set $P$ is {\bf measure universal} if every measurable set of positive Lebesgue measure contains an affine copy of $P$.  In early 1970's,  Erd\H{o}s  (see example \cite{Erd74} or \cite{Erd15}) proposed a conjecture, which is open until today.

\medskip

{\bf Erd\H{o}s similarity Conjecture:} There is no infinite measure universal set.

\medskip

To validate that the conjecture is true, it would suffice to show that for any positive strictly decreasing sequence $(a_n)_{n=1}^\infty$ with $a_n\to 0$, there exists a Lebesgue measurable set $E$  of positive measure such that $E$ contains no affine copy of this sequence. The first progress was made by  Eigen \cite{Eig85} and Falconer \cite{Fal84}  independently who showed that if  $(a_n)_{n=1}^\infty$ is a decreasing sequence converging to $0$ slowly in the sense that
\begin{equation}
\label{e-e0}
\lim_{n\to\infty} \frac{a_{n+1}}{a_n} = 1,
\end{equation}
then $(a_n)_{n=1}^\infty$ is not measure universal. Humke and Laczkovich \cite{HL98} further showed that the condition \eqref{e-e0} can be weaken to $
\limsup_{n\to\infty} {a_{n+1}}/{a_n} = 1$ if in addition $(a_n-a_{n+1})$ is monotone decreasing.    Kolountzakis \cite{Kol97} showed that  an infinite set $A\subset \R$    is not measure universal if for all $n\in\N$ sufficiently large, $A$ contains a subset  $\{a_1>a_2>\cdots>a_n>0\}$ such that
$$
-\log \left( \min_{1\leq i\leq n-1}\frac{a_i-a_{i+1}}{a_1}\right) = o(n).
$$
His result can be used to recover  that of Eigen and Falconer. Using a probabilistic argument,  Kolountzakis further showed that the conjecture is   almost surely true: for every sequence $A= (a_n)_{n=1}^{\infty}$, there exists a  measurable set $E$ of positive Lebesgue measure such that  the set
$$
\{(\lambda,t)\in \R^2: \lambda A+t\subset E \}
$$
has two dimensional Lebesgue measure zero. It is still an open question to determine whether any given sequence $(a_n)_{n=1}^\infty$,  whose ratio limit is  strictly less than 1 (e.g. $a_n=2^{-n}$), is measure universal. The reader is referred to \cite{CLP22} for some recent progresses and to \cite{Svetic} for other progresses of the problem before year 2000.

\medskip

 There are  some other types of infinite sets determined to be measure non-universal via different methods. Bourgain \cite{Bou87} converted the conjecture into a problem about  the boundedness of certain operators (see also \cite{Tao21}). He showed that $S_1+S_2+S_3$ is not measure universal if all $S_i$ are infinite sets.  It is not known if this result is also true about the sum of two infinite sets. Along this direction, Kolountzakis \cite{Kol97} showed a special case that $
 \{2^{-n^{\alpha}}\}_{n=1}^{\infty}+ \{2^{-n^{\alpha}}\}_{n=1}^{\infty}
 $ with $\alpha\in(0,2)$
 is not measure universal. More recently, Gallagher, the second named author and Weber \cite{GLW23} showed that Cantor sets of positive Newhouse thickness are not measure universal. This was followed up immediately by Kolountzakis \cite{Kol23} who showed that some Cantor sets of Newhouse thickness zero was also measure non-universal. Nonetheless, despite being uncountable, it is still unknown  if all Cantor sets are not measure universal.

 \medskip

 Another interesting variant, known as  the Erd\H{o}s similarity problem ``in the large",  was first initiated by Bradford, Kohut and Mooroogen \cite{BKM22} and later improved in \cite{KP23}. In these work, they showed that for any  unbounded sequence of certain restricted increasing rate and $p\in(0,1)$, there always exists  a  measurable set $E$ of positive Lebesgue measure  such that ${\mathcal L}(E\cap [x,x+1])\ge p$ for all $x\in\R$ and $E$ does not contain an affine copy of this sequence. Here ${\mathcal L}$ stands for Lebesgue measure. In a further improvement, Burgin-Goldberg-Keleti-MacMahon-Wang \cite{BGKMW22} turned a set ``in the large'' back to  a compact set  $E\subset \R$ such that $0$ is a Lebesgue density point of $E$
but $E$ does not contain any (non-constant) infinite  geometric progression.

\medskip

It is worth to mention that  the Erd\H{o}s similarity problem has arose many different interests and variations of the study under other notions of size, such as  Hausdorff dimension.  For any countable collection of sets of three points, Keleti \cite{Ke2008} constructed a compact subset of the real line with Hausdorff dimension $1$
 that contains no affine copy of any of the given triplets.  On the other hand, there also exists a closed set  of real numbers with Hausdorff dimension zero which contains affine copies of all finite sets \cite{DMT60} (such a set must have packing dimension 1, see  \cite[Lemma 5.3]{SS2017}).   By assuming certain Fourier decay conditions, {\L}aba and Pramanik \cite{LP2009} showed that a large class of fractal sets contain  non-trivial 3-term arithmetic progressions.   Meanwhile Shmerkin constructed some Salem sets which contain no 3-term arithmetic progression \cite{S2017}. A recent in-depth study of the detection of patterns in relation to the Fourier dimension can be found in \cite{LP22}.

\subsection{Main Results}

In a private communication \cite{Jin23}, Xiong Jin proposed another variant of the Erd\H{o}s similarity problem by considering the bi-Lipschitz copies, instead of the affine copies.  Indeed, bi-Lipschitz or $C^1$ embedding problems have been studied intensively between self-similar sets (see e.g. \cite{DWXX11,FHR15, Alg20}).  In this paper we will prove a sharp characterization to  decreasing sequences being universal in the sense of having bi-Lipschitz copies.

\medskip

 Recall that a  map $f:\R\to\R$ is  said to be {\bf bi-Lipschitz} if there exists a constant $L>1$ such that
$$
L^{-1} |x-y| \le |f(x)-f(y)|\le L |x-y|\quad \mbox{ for all }\; x,y\in\R.
$$
A {\bf bi-Lipschitz copy} of a set $A$ is the image $f(A)$ where $f$ is a bi-Lipschitz map.  Clearly, an affine copy must be a bi-Lipschitz copy. If a bi-Lipschitz copy of $A$ is contained  in a set $E$, we will say that $A$ can be {\bf bi-Lipschitz embedded} into $E$. We will say that $A$ is {\bf  bi-Lipschitz measure universal} if $A$ can be bi-Lipschitz embedded into every measurable set of positive Lebesgue measures.  Our first  result is the following.

\begin{thm}\label{t:BLE}
    Let $(a_n)_{n=1}^\infty$ be a strictly decreasing sequence of positive numbers with $a_n\to0$ as $n\to\infty$. If there exists an integer $N\ge1$ such that
	\[ \limsup_{n\to\infty} \frac{a_{n+N}}{a_n}<1, \]
	then for any measurable set $E\subset\R$ with positive Lebesgue measure, there exists a bi-Lipschitz map $f\colon\R\to\R$ such that $f(a_n)\in E$ for all $n\ge1$ and $f'(0)=1$.
\end{thm}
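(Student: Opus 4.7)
The plan is to construct $f$ as a piecewise linear interpolation through carefully chosen points. After replacing $E$ by $E-c$ for a Lebesgue density point $c\in E$ (the statement is invariant under the correction $f\mapsto f+c$ at the end), I may assume $0\in E$ and the density deficit $\omega(r):=|E^{c}\cap[-r,r]|/(2r)$ tends to $0$ as $r\to 0^{+}$. It then suffices to produce a strictly decreasing sequence $(b_n)\subset E$ converging to $0$ with $b_n/a_n\to 1$ and slopes $s_n:=(b_n-b_{n+1})/(a_n-a_{n+1})$ both bounded and tending to $1$: setting $f(0)=0$, interpolating linearly through $(a_n,b_n)$, and extending $f$ with slope $1$ outside $[0,a_1]$ yields the required bi-Lipschitz map with $f'(0)=1$.

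Fix $\rho\in(\limsup_{n\to\infty} a_{n+N}/a_n,\,1)$ and $n_0$ with $a_{n+N}\le\rho\,a_n$ for $n\ge n_0$. Since $\sum_{i=0}^{N-1}(a_{n+i}-a_{n+i+1})=a_n-a_{n+N}\ge(1-\rho)a_n$, in every window of $N$ consecutive indices $\ge n_0$ at least one ``large-gap'' index $m$ satisfies $a_m-a_{m+1}\ge(1-\rho)a_m/N$. Listing all such indices as $L=\{l_1<l_2<\cdots\}$, I define clusters $C_j:=\{l_{j-1}+1,\ldots,l_j\}$, which have size $|C_j|\le N$. Writing $a_j^{*}:=a_{l_{j-1}+1}$ for the cluster scale, summing the small within-cluster gaps (each less than $(1-\rho)a_m/N\le (1-\rho)a_j^{*}/N$) yields $a_j^{*}-a_{l_j}<(N-1)(1-\rho)a_j^{*}/N$, so $a_{l_j}/a_j^{*}\ge\kappa:=(1+(N-1)\rho)/N>0$. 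Consequently the inter-cluster gap satisfies $a_{l_j}-a_{l_j+1}\ge(1-\rho)\kappa\,a_j^{*}/N$, which is \emph{comparable to the cluster scale}---the crucial structural fact that I will exploit.

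The core of the argument is a Steinhaus-type estimate applied cluster by cluster. For each $j$, form $T_j:=\bigcap_{n\in C_j}(E-a_n)$. The bound $|E^{c}\cap[a_n-r,a_n+r]|\le 2(a_n+r)\,\omega(a_n+r)\le 2(a_j^{*}+r)\,\omega(a_j^{*}+r)$ valid for $n\in C_j$, combined with the union bound over the at most $N$ indices, gives
\[
|T_j\cap[-\epsilon a_j^{*},\epsilon a_j^{*}]|\;\ge\;2\epsilon a_j^{*}-2N(1+\epsilon)a_j^{*}\,\omega\!\bigl((1+\epsilon)a_j^{*}\bigr).
\]
Choosing $\epsilon_j$ slightly greater than $N\omega(2a_j^{*})$, which tends to $0$ as $j\to\infty$, yields $\tau_j\in T_j$ with $|\tau_j|\le\epsilon_j a_j^{*}$. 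I then set $b_n:=a_n+\tau_j$ for $n\in C_j$ (and handle the finitely many indices $n<n_0$ by a single Steinhaus application to $\{a_1,\ldots,a_{n_0-1}\}$ adjusted to lie above $b_{n_0}$). Within a cluster the slope is exactly $1$; across clusters the slope equals $1+(\tau_j-\tau_{j+1})/(a_{l_j}-a_{l_j+1})=1+O(\epsilon_j)$ by the comparability just established, so the slopes are globally bounded and tend to $1$. Moreover $|b_n-a_n|/a_n\le\epsilon_j a_j^{*}/a_n\le\epsilon_j/\kappa\to 0$, giving $b_n/a_n\to 1$ and hence $f'(0)=1$.

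The main obstacle I anticipate is establishing the Steinhaus-type estimate with the \emph{correct rate} $\epsilon_j\to 0$: although within-cluster gaps $a_n-a_{n+1}$ can be arbitrarily small, the estimate only needs the density of $E$ at the \emph{single} scale $a_j^{*}$, because all members of $C_j$ lie in $(0,a_j^{*}]$; a naive approach that tries to control $b_n$ at scale $a_n-a_{n+1}$ individually would fail. A secondary point is the cluster decomposition itself: the ``large-gap'' definition of $L$ simultaneously enforces $|C_j|\le N$ and guarantees inter-cluster gaps proportional to the cluster scale, which is precisely what is needed to make the inter-cluster slopes converge to $1$.
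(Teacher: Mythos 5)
Your proposal follows essentially the same strategy as the paper's proof: partition the tail of $(a_n)$ into clusters of at most $N$ consecutive indices whose boundaries occur at proportionally large gaps (you locate these via $a_m-a_{m+1}\ge(1-\rho)a_m/N$ while the paper uses the analogous condition $a_{n+1}/a_n<\delta$ with $\delta^N$ bounding $\limsup a_{n+N}/a_n$, yielding the same structure: cluster size $\le N$ and inter-cluster gap comparable to the cluster scale), then run a Steinhaus density argument cluster by cluster to extract a small common shift $\tau_j$, and finally interpolate linearly, using the dominance of the inter-cluster gap over $|\tau_j|$ to get slopes tending to $1$ and hence $f'(0)=1$. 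The one imprecision is that your Steinhaus estimate yields a usable $\tau_j$ only once $\omega(2a_j^{*})$ is sufficiently small, so the cluster construction must begin at a sufficiently large cluster index $j_0$ rather than simply at $n_0$ (the paper introduces the cutoff $p$ for exactly this purpose), with the finitely many earlier terms absorbed into the separate initial argument you already describe.
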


This result shows that fast decaying sequences like $(2^{-n})_{n=1}^\infty$ is bi-Lipschitz measure universal.  Moreover, the derivative condition implies that the map becomes very close to an affine map at the limit point. This suggested some negative evidence of the Erd\H{o}s similarity conjecture, indicating that the set avoiding affine copies of fast decaying sequences will not be so easily constructed. Furthermore, as our proof will be based on the Lebesgue density theorem,  in comparison to the aforementioned result of \cite{BGKMW22}, we can still deduce the existence of  bi-Lipschitz copies of geometric sequences around every density point.

\medskip

In contrast to  the above result for fast decaying sequences, our second result states that a slow decaying sequence can not be bi-Lipschitz embedded into all measurable sets of positive Lebesgue measure,  which generalizes the aforementioned affine embedding result by Eigen \cite{Eig85} and Falconer \cite{Fal84}. This also provides a new proof of their result.

\begin{thm}\label{p:nonemb}
    Let $(a_n)_{n=1}^\infty$ be a strictly decreasing sequence of positive numbers with $a_n\to0$ as $n\to\infty$. If
	\[ \lim_{n\to\infty}\frac{a_{n+1}}{a_n}=1,\]
	then there exists a  compact set $E\subset\R$ with positive Lebesgue measure such that $(a_n)_{n=1}^\infty$ can not be bi-Lipschitz embedded into~$E$.
\end{thm}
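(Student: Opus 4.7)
The plan is to adapt the Eigen--Falconer periodic-gap construction from the affine setting. By a standard diagonal argument, it suffices to construct, for each integer $L \ge 2$, a compact set $E_L \subseteq [0,1]$ with $|E_L| \ge 1 - 2^{-L}$ such that no $L$-bi-Lipschitz map $f\colon\R\to\R$ embeds $(a_n)$ into $E_L$. Then $E := [0,1] \cap \bigcap_{L \ge 2} E_L$ is compact with $|E| \ge 1/2 > 0$, and any bi-Lipschitz embedding of $(a_n)$ into $E$ would have some finite constant $L_0$, contradicting the construction of $E_L$ for $L = \lceil L_0 \rceil$.

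To set up the analysis, recall that a bi-Lipschitz map $\R \to \R$ is monotone, and without loss of generality we may assume $f$ is increasing. Writing $y_n := f(a_n)$ and $y^* := f(0)$, we have $y_n \searrow y^*$ with $y_n - y^* \in [L^{-1} a_n, L a_n]$ and $y_n - y_{n+1} \in [L^{-1}(a_n - a_{n+1}), L(a_n - a_{n+1})]$. Thus a bi-Lipschitz image is confined to a narrow ``tube'' around some affine configuration $y^* + c a_n$ with $c \in [L^{-1}, L]$. Using $a_{n+1}/a_n \to 1$, for each $k$ one can select an index $n_k$ and an integer $N_k \to \infty$ so that $a_{n_k + j}/a_{n_k} \ge 1 - \eta_k$ for $0 \le j \le N_k$, with $\eta_k \to 0$ sufficiently fast; consequently the $N_k + 1$ points $y_{n_k}, \dots, y_{n_k + N_k}$ cluster in an interval of length at most $L a_{n_k}$, while their total drift $\sum_{j=0}^{N_k-1}(y_{n_k+j} - y_{n_k+j+1})$ is at least $L^{-1}\eta_k a_{n_k}$.

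Now define $E_L := [0,1] \setminus \bigcup_k G_k$, where each $G_k \subset \R$ is a periodic collection of open gaps of period $p_k$ comparable to $L a_{n_k}/N_k$ and individual gap-width $\epsilon_k p_k$, with $\epsilon_k$ chosen summable so that $\sum_k \epsilon_k < 2^{-L}$. A pigeonhole/equidistribution argument on the residues $y_{n_k + j} \bmod p_k$---which must drift through an entire period thanks to the guaranteed total spread above---forces at least one $y_{n_k + j}$ to lie in a gap of $G_k$, contradicting $y_n \in E_L$ for all $n$. The main obstacle is calibrating the triple $(N_k, p_k, \epsilon_k)$ so that this pigeonhole is robust against (i) the arbitrary position of $y^*$, handled by the translation-invariance of the periodic $G_k$, and (ii) the possible fluctuation of the local scale factors $(y_n - y^*)/a_n \in [L^{-1}, L]$---the essentially new feature of the bi-Lipschitz setting, which forces $p_k$ to carry an extra safety factor of $L$ compared with the affine case---all the while keeping the total removed measure summable so that $|E_L| \ge 1 - 2^{-L}$.
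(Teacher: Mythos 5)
Your high-level strategy --- defeat $L$-bi-Lipschitz maps with a periodic-gap set and then diagonalize over $L$ --- is close in spirit to the paper's, which builds a single set $E=\bigcap_k E_k$ from a family of periodic-gap sets and, for any given bi-Lipschitz constant $L$, derives a contradiction from $E_k$ with $k$ sufficiently large. However, the central ``pigeonhole/equidistribution'' step is not sound as sketched, and the paper takes a structural precaution to circumvent exactly this.

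The hypothesis $a_{n_k+j}/a_{n_k}\ge 1-\eta_k$ for $0\le j\le N_k$ controls only the \emph{total} drop $a_{n_k}-a_{n_k+N_k}\le\eta_k a_{n_k}$; it gives no floor under it, so your claim that the total drift is at least $L^{-1}\eta_k a_{n_k}$ does not follow from the stated selection. More seriously, it gives no control on the \emph{individual} differences $a_{n_k+j}-a_{n_k+j+1}$ beyond the trivial bound $\eta_k a_{n_k}$: one of them could carry essentially the entire drop while the others are negligible. An $L$-bi-Lipschitz map can then crowd all the tiny-difference images into a single component of $E_L$ and use the one large difference to step cleanly over the adjacent gap of $G_k$ into the next component, so no $y_{n_k+j}$ is forced into a gap. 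Equidistribution of residues modulo $p_k$ does not follow from a bound on the sum of the steps; it requires a bound on each step.

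The paper closes this gap by first passing to a regularized subsequence (Lemma~\ref{lem-3.1}): using only $a_{n+1}/a_n\to1$, one extracts $(a_{n_k})$ with $a_{n_{k+1}}/a_{n_k}\to1$ and $a_{n_k}-a_{n_{k+1}}\le 2\,(a_{n_m}-a_{n_{m+1}})$ for all $k>m$, i.e.\ the consecutive differences along the subsequence are essentially monotone; this is precisely hypothesis~\eqref{e-eT} of Proposition~\ref{prop-3.2}. With that in hand, the periodic construction works by a \emph{trapping} argument rather than a residue pigeonhole: the gap width $\delta_k$ of $E_k$ is chosen to exceed $L$ times every remaining consecutive difference, so that each image step $|b_m-b_{m+1}|$ with $m\ge n_k$ is strictly less than $\delta_k$; hence the whole image tail is confined to one component of $E_k$, and the contradiction is that $|b_{n_k}-b_\infty|\ge L^{-1}a_{n_k}$ exceeds the component length $1/\ell_k$. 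This argument is robust precisely because it bounds each step individually. To repair your proof you would need either to pass to such a regularized subsequence before building $E_L$, or to supply a separate mechanism controlling the individual consecutive differences; as it stands, your selection of $n_k$, $N_k$, $\eta_k$ does not provide one.
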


More generally, we provide a classification theorem for a type of decreasing sequences. The condition includes the commonly-known convex deceasing sequences i.e. $a_{n+1}\le \frac{a_{n}+a_{n+2}}{2}$ for all $n\ge 2$.

\begin{thm}\label{t:iff}
	Let $(a_n)_{n=1}^\infty$ be a strictly decreasing sequence of positive numbers with $a_n\to0$ as $n\to\infty$. Suppose in addition
	\begin{equation}
	\label{e-eT}
	 \sup_{m>n>1}\frac{a_{m-1}-a_m}{a_{n-1}-a_n}<\infty.
	\end{equation}
	Then $(a_n)_{n=1}^\infty$ is bi-Lipschitz measure universal  if and only if
	$\limsup_{n\to\infty}\frac{a_{n+1}}{a_n}<1. $
\end{thm}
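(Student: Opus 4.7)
For the \emph{if} direction, suppose $\limsup_{n\to\infty} a_{n+1}/a_n<1$; this is exactly the hypothesis of Theorem~\ref{t:BLE} with $N=1$, and so that theorem directly yields bi-Lipschitz measure universality. Condition~\eqref{e-eT} plays no role in this direction.

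For the \emph{only if} direction, assume~\eqref{e-eT} holds with some constant $C$ and that $\limsup_{n\to\infty} a_{n+1}/a_n=1$. My plan is to construct a strictly decreasing subsequence $(a_{m_l})_{l\ge1}$ of $(a_n)$ with $\lim_{l\to\infty} a_{m_{l+1}}/a_{m_l}=1$, and then apply Theorem~\ref{p:nonemb} to $(a_{m_l})$: the compact set $E$ of positive Lebesgue measure produced by that theorem contains no bi-Lipschitz copy of $(a_{m_l})$, hence none of $(a_n)$ either, so that $(a_n)$ fails to be bi-Lipschitz measure universal.

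Write $g_j:=a_{j-1}-a_j$. Pick $n_k\to\infty$ with $\epsilon_k:=g_{n_k+1}/a_{n_k}\to 0$; after passing to a sub-subsequence (and using that $a_n\le a_1$) one can arrange the absolute gaps $g_{n_k+1}$ to decay at any prescribed geometric rate, say $g_{n_k+1}<\rho^k$ for a $\rho$ to be chosen below. Then~\eqref{e-eT} gives $g_j\le Cg_{n_k+1}$ for every $j>n_k$, so the tail $(a_n)_{n>n_k}$ is $Cg_{n_k+1}$-dense in $(0,a_{n_k}]$. Fix $\alpha\in(0,1)$ and assemble $(a_{m_l})$ in blocks: set $m_1^{\mathrm{start}}=n_{k(1)}$ and, for $l\ge 1$, let block $l$ be the run of consecutive indices $m_l^{\mathrm{start}},\ldots,m_l^{\mathrm{end}}$ whose $a$-values stay above $\alpha\, a_{m_l^{\mathrm{start}}}$; then put $m_{l+1}^{\mathrm{start}}:=m_l^{\mathrm{end}}+1$ and choose $k(l+1)$ to be the largest $k$ with $n_k<m_{l+1}^{\mathrm{start}}$. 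The in-block consecutive ratios in block $l$ are bounded below by $1-Cg_{n_{k(l)}+1}/(\alpha a_{m_l^{\mathrm{start}}})$, while the transition ratio $a_{m_{l+1}^{\mathrm{start}}}/a_{m_l^{\mathrm{end}}}$ is at least $1-Cg_{n_{k(l+1)}+1}/(\alpha a_{m_l^{\mathrm{start}}})$.

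The main obstacle is the diagonal choice of $k(l)$ that drives both error bounds to zero as $l\to\infty$. Since $a_{m_l^{\mathrm{start}}}$ decreases by at most a factor $\alpha$ per block, one has $a_{m_l^{\mathrm{start}}}\gtrsim\alpha^l a_{m_1^{\mathrm{start}}}$; and since block $l$ has length $\gtrsim(1-\alpha)/(C\epsilon_{k(l)})\to\infty$, the starting indices $m_l^{\mathrm{start}}\to\infty$, which forces $k(l)\to\infty$. Choosing $\rho<\alpha$ in the pre-selection of $(n_k)$ makes $g_{n_{k(l)}+1}<\rho^{k(l)}$ decay faster than $\alpha^l$ once $k(l)$ grows at least linearly in $l$, so that $Cg_{n_{k(l)}+1}/(\alpha a_{m_l^{\mathrm{start}}})\to 0$. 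With this coordinated diagonal selection the constructed subsequence is infinite, satisfies $a_{m_l}\to 0$, and has $\lim_l a_{m_{l+1}}/a_{m_l}=1$, so Theorem~\ref{p:nonemb} applies and completes the proof.
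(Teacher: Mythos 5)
The \emph{if} direction is fine. The \emph{only if} direction, however, has a fatal gap: the strategy of extracting a subsequence $(a_{m_l})$ with $\lim_{l\to\infty} a_{m_{l+1}}/a_{m_l}=1$ and then invoking Theorem~\ref{p:nonemb} cannot succeed in the regime that actually matters, namely when $\liminf_{n} a_{n+1}/a_n<1$. Indeed, if $\liminf_n a_{n+1}/a_n\le\beta<1$, there are infinitely many indices $p$ with $a_{p+1}\le\beta a_p$, and since any decreasing subsequence $(a_{m_l})$ tends to $0$ it must eventually ``cross'' each such index (some $l$ has $m_l\le p<m_{l+1}$), forcing $a_{m_{l+1}}/a_{m_l}\le a_{p+1}/a_p\le\beta$ infinitely often. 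So a subsequence with consecutive ratios tending to $1$ exists only if the original sequence already has $\lim_n a_{n+1}/a_n=1$, which is not implied by~\eqref{e-eT} together with $\limsup_n a_{n+1}/a_n=1$.

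A concrete counterexample to the construction: let $A_k=2^{-2^k}$ and build $(a_n)$ by listing, for each $k\ge1$, the values $A_{k-1},\,A_{k-1}-A_k,\,A_{k-1}-2A_k,\,\dots,\,A_k$ (constant gap $A_k$ within level $k$). The gaps are monotone non-increasing, so~\eqref{e-eT} holds with $C=1$; the ratio at the top of level $k$ is $1-A_k/A_{k-1}=1-2^{-2^{k-1}}\to1$, so $\limsup_n a_{n+1}/a_n=1$; but the ratio at the bottom of every level is exactly $1/2$, and any subsequence converging to $0$ crosses this $1/2$-drop at value $2A_k$ infinitely often, so no subsequence has consecutive ratio tending to $1$. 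Your diagonal estimate conceals the problem: the assertion that ``$k(l)$ grows at least linearly in $l$'' is not justified --- in this example the number of $\alpha$-blocks inside level $k$ is on the order of $2^{k-1}/\log_2(1/\alpha)$, which grows exponentially in $k$, so $k(l)$ grows only logarithmically in $l$ and the bound $Cg_{n_{k(l)}+1}/(\alpha a_{m_l^{\mathrm{start}}})$ does not tend to $0$. The paper avoids this difficulty entirely: it proves a stronger Proposition~\ref{prop-3.2} which produces the obstructing compact set directly under the hypotheses~\eqref{e-eT} and $\limsup_n a_{n+1}/a_n=1$, rather than reducing to the $\lim=1$ case of Theorem~\ref{p:nonemb}. (Lemma~\ref{lem-3.1} is used the other way around --- to derive Theorem~\ref{p:nonemb} from Proposition~\ref{prop-3.2} --- not to upgrade a $\limsup$ hypothesis to a $\lim$ hypothesis.)
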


The `only  if' part of the above theorem generalizes the aforementioned affine embedding result of Humke and Laczkovich \cite{HL98}.  It is worth pointing out that  the condition \eqref{e-eT}  in Theorem \ref{t:iff} can not be dropped.  Indeed, there  are many examples of  decreasing sequences $(a_n)_{n=1}^\infty$
with $\lim_{n\to \infty} a_n=0$ such that
$$
\limsup_{n\to \infty} \frac{a_{n+2}}{a_n}<1\quad  \mbox{ but } \quad \limsup_{n\to \infty} \frac{a_{n+1}}{a_n}=1.
$$
For instance, this is the case if
\[ a_n= \begin{cases}
	(2^k-1)^{-1}, & \text{if $n=2k-1$}, \\
	2^{-k}, & \text{if $n=2k$}.
\end{cases} \]
By Theorem \ref{t:BLE},  the above sequence $(a_n)_{n=1}^\infty$ can be  bi-Lipscitz embedded into every measurable set of positive Lebesgue measure, although $\limsup_{n\to \infty}{a_{n+1}}/{a_n}=1$.

\medskip

  Apart from fast decaying decreasing sequence being bi-Lipschitz embedded into every measurable set of Lebesgue measures as shown in Theorem \ref{t:BLE}, we finally demonstrate a type of countable sets with infinitely many limit points are also bi-Lipschitz measure universal.

\begin{thm}\label{th:uniform}
	Let $A = (a_n)_{ n=1}^{\infty}$ be a sequence of positive numbers such that
	\[ a_1+\sum_{n=1}^{\infty}\frac{a_{n+1}}{a_n}<\frac{1}{8}. \]
	Then the set
	\[ F = \bigcup_{n=1}^{\infty} 3^{-n} (1+A) \]
	is bi-Lipschitz  measure universal.
\end{thm}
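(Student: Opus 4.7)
The plan is to construct the desired bi-Lipschitz $f$ by a two-scale application of Theorem~\ref{t:BLE}: first, place anchor points in $E$ for the limit points $3^{-n}$ of $F$; second, around each anchor embed the local copy $3^{-n}(1+A)$ inside $E$; finally, glue these pieces together. Write $A=(a_k)_{k=1}^\infty$ to distinguish the inner index $k$ from the outer level $n$. The hypothesis $a_1+\sum_{k\ge 1}a_{k+1}/a_k<1/8$ forces $a_{k+1}/a_k\to 0$, so in particular $\limsup_k a_{k+1}/a_k<1$, and Theorem~\ref{t:BLE} applies to $(a_k)$. It also applies to $(3^{-n})_{n=1}^\infty$ since $3^{-(n+1)}/3^{-n}=1/3<1$.

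I would first translate $E$ so that $0$ is a Lebesgue density point, then apply Egorov's theorem to $\phi_r(x):=|E\cap[x-r,x+r]|/(2r)$ along a sequence $r\to 0$ to extract a compact subset $E'\subset E$ of positive measure on which the density is \emph{uniform}: there exists $\eta\colon(0,r_0]\to(0,1)$ with $\eta(r)\to 0$ and $|E\cap[x-r,x+r]|\ge(1-\eta(r))\cdot 2r$ for every $x\in E'$ and $r\in(0,r_0]$. Applying Theorem~\ref{t:BLE} to the outer sequence $(3^{-n})$ with target $E'$ yields a bi-Lipschitz map $g\colon\R\to\R$ with $p_n:=g(3^{-n})\in E'$ and $g'(0)=1$; in particular $p_n=3^{-n}(1+o(1))$ and each $p_n$ inherits the uniform density estimate.

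For each $n$ set $E_n:=3^n(E-p_n)$. Since $p_n\in E'$, zero is a density point of $E_n$ with rate $\eta(3^{-n}\,\cdot\,)$ on a scale $[-3^n r_0, 3^n r_0]$ that grows with $n$; for large $n$ the density rate on any fixed neighbourhood of $0$ is therefore arbitrarily good, while the finitely many small $n$ can be handled individually. Apply Theorem~\ref{t:BLE} to $(a_k)$ and $E_n$ to obtain bi-Lipschitz $h_n\colon\R\to\R$ with $h_n(a_k)\in E_n$ and $h_n'(0)=1$, arranging $h_n(0)=0$ by post-composing a translation. Fix some $\beta\in(a_1,1/3)$ and define $f\colon\R\to\R$ by setting
\[
  f(x)=p_n+3^{-n}\,h_n\!\bigl((x-3^{-n})/3^{-n}\bigr)\quad\text{on }I_n:=[3^{-n},3^{-n}(1+\beta)],
\]
linearly interpolating across each gap $(3^{-n}(1+\beta),3^{-(n-1)})$ between the matching boundary values, and extending bi-Lipschitz on $(-\infty,0]$ and $[1,\infty)$. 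By construction, $f(3^{-n}(1+a_k))=p_n+3^{-n}h_n(a_k)\in p_n+3^{-n}E_n=E$, hence $f(F)\subset E$.

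The main obstacle is verifying that $f$ is globally bi-Lipschitz. Inside each $I_n$ this amounts to a uniform-in-$n$ bi-Lipschitz bound on $h_n$, which requires a quantitative reading of the proof of Theorem~\ref{t:BLE} in which the Lipschitz constant is controlled by the density function $\eta$; the uniformity of $\eta$ on $E'$ then delivers a common bound. Across each interpolation gap the slope evaluates to $[p_{n-1}-p_n-3^{-n}h_n(\beta)]/[3^{-n}(2-\beta)]$, which tends to $1$ because $g'(0)=1$ and $h_n$ is close to the identity on $[0,\beta]$. It is precisely in quantifying these ``closeness to identity'' estimates \emph{uniformly} in $n$, and in keeping the slopes bounded away from $0$ on every gap, that the strong summability hypothesis $a_1+\sum_{k\ge 1}a_{k+1}/a_k<1/8$ is expected to enter decisively, yielding a finite global bi-Lipschitz constant for $f$.
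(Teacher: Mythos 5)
Your high-level plan — localize to the dyadic-scale intervals around $3^{-n}$, embed $1+A$ there, then glue by linear interpolation across the gaps — matches the paper's strategy. But the crucial step you defer as ``a quantitative reading of the proof of Theorem~\ref{t:BLE}'' is a genuine gap, and it is precisely the gap the paper fills by proving a separate theorem (Theorem~\ref{th:uniform-biLip}).

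Theorem~\ref{t:BLE} as stated and proved does not yield a bi-Lipschitz constant that you can make uniform over $n$, even after Egorov. In its proof, the initial segment $(b_j)_{j=1}^{n_p}$ is chosen \emph{arbitrarily} from $E\cap(v_p,\infty)$, which gives no control on the slopes over $[a_{j+1},a_j]$ for small $j$; and the remark after Theorem~\ref{th:uniform-biLip} shows that no uniform bi-Lipschitz lower bound can exist over all measurable sets of positive measure, since for $E=[0,\varepsilon_0]$ the lower Lipschitz constant is forced below $\varepsilon_0/a_1$. So uniformity in $n$ requires an assumption stronger than ``positive measure,'' and turning the density-point argument of Theorem~\ref{t:BLE} into a uniform statement would essentially be a new proof, not a quantitative reading. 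Your proposal acknowledges the obstacle but does not resolve it.

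The paper's route is different and cleaner. It establishes Theorem~\ref{th:uniform-biLip}: for a sequence with $\delta:=a_1+\sum a_{k+1}/a_k<1/8$ and \emph{any} $E\subset[0,1]$ with $\mathcal L(E)>\tfrac12+4\delta$, one gets an embedding with the explicit, set-independent constants $1/2$ and $3/(1-\delta)$. The mechanism is not the density-point translation trick of Lemma~\ref{l:Ik}; it is a nested dyadic-pair construction via Lemma~\ref{lemma_Leb_estimate}, which in an interval of $E$-density $\ge t$ locates two subintervals at prescribed relative position, one of density $\ge t-\varepsilon$ and one merely nonempty. Because Theorem~\ref{th:uniform-biLip} needs only a lower bound on the measure of $E$ restricted to $[0,1]$, not a density point inside it, the gluing in Theorem~\ref{th:uniform} is immediate: the Lebesgue density theorem at $0$ makes $\mathcal L(E\cap[3^{-n},2\cdot 3^{-n}])/3^{-n}>\tfrac12+4\delta$ for $n$ large, and no Egorov argument, no auxiliary anchor points $p_n$, and no second application of Theorem~\ref{t:BLE} to $(3^{-n})$ are needed. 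Your two-scale idea is plausible, but to make it rigorous you would have to prove a uniform quantitative variant of Theorem~\ref{t:BLE} yourself, which is the actual content of the section.
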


Theorem \ref{th:uniform}  has  demonstrated that a set with infinitely many limit points  could be universally bi-Lipschitz embedded. It will be an interesting question to determine  whether or not sets that were previously studied, such as  a Minkowski sum of three infinite sets or even a Cantor set, can be bi-Lipschitz measure universal.   For Cantor sets, we actually know that if a bi-Lipschitz measure universal Cantor set exists, it must have Newhouse thickness zero. It is because in \cite[Theorem 1.5]{GLW23}, it has been shown that there exists a set $G$ of full Lebesgue measure not containing any Cantor sets of positive Newhouse thickness. As a bi-Lipschitz image of a Cantor set with positive Newhouse thickness still has positive Newhouse thickness, the existence of the set $G$ immediately implies that Cantor sets with positive Newhouse thickness can not be bi-Lipschitz measure universal. 

Moreover, it is worth noting that Theorem~\ref{th:uniform} also implies that the condition in Theorem~\ref{t:BLE} is not necessary to ensure the bi-Lipschitz embedding. To see this, let $A = ( 4^{-4^n})_{n=1}^{\infty}$. One can check that $A$ satisfies the assumption of Theorem~\ref{th:uniform}. Now let $A_n$ be a finite subset of $A$ with cardinality $\ge n$ for each $n\ge1$. Set $E=\bigcup_{n=1}^\infty  3^{-n} (1+A_n).$ Then $E$ is a decreasing sequence such that $\limsup_{n\to\infty} a_{n+N}/a_n=1$ for all $N\in\N$ \footnote{$\liminf_{n\to\infty} a_{n+1}/a_n<1$ still holds, so it does not contradict Theorem \ref{p:nonemb}.}, yet $E$ is still bi-Lipschitz measure universal.  Hence, the converse of Theorem \ref{t:BLE} is also not true.

The proof of Theorem~\ref{th:uniform} involves a study about universally bi-Lipschitz embedding with uniform bi-Lipschitz bound. A quantitative result will be given in Theorem \ref{th:uniform-biLip}.


The paper is organized as follows. In Section~\ref{S-2}, we prove Theorem \ref{t:BLE}. In Section~\ref{S-3}, we prove Theorems~\ref{p:nonemb} and \ref{t:iff}. We will prove Theorem \ref{th:uniform} with a study of uniform  universal bi-Lipschitz embedding in Section \ref{S-4}.

\section{ The proof of Theorem \ref{t:BLE}}
\label{S-2}

Let $\mathcal L$ denote the Lebesgue measure on $\R$. The proof of Theorem \ref{t:BLE} is based on the following.

\begin{lem}\label{l:Ik} Let $N\in \N$ and $\delta\in (0,1)$. Let $E$ be a Lebesgue measurable subset of $\R$ and $I=[\delta u, v]$, where
\begin{equation}
\label{e-e8}
0<\delta^{N-1}v\leq u\leq v.
\end{equation}
Suppose that
\begin{equation}
\label{e-e9}
\rho:=\frac{\mathcal L(I\setminus E)}{\mathcal L(I)}<N^{-2}\delta^N.
\end{equation}
 Then for any $x_1, \ldots, x_N\in [u,v]$, there exists $0\leq t\leq (1-\delta)N^2\delta^{-N}\rho u$ such that $x_1-t,\ldots, x_N-t\in E\cap I$.
\end{lem}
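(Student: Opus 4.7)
The plan is to reduce the lemma to a routine Fubini/volume estimate on the ``bad'' translations. Set $T := (1-\delta)N^2\delta^{-N}\rho u$; this is the upper bound on $t$ allowed by the conclusion. For each $i$, let
\[
B_i \;:=\; \bigl\{\, t\in[0,T] : x_i - t \notin E\cap I \,\bigr\},
\]
so the goal is to produce $t\in[0,T]\setminus\bigcup_{i=1}^{N}B_i$, which I will do by showing the union has measure strictly less than $T$.

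First I would dispense with the ``$\in I$'' part. Since $x_i\in[u,v]$, the point $x_i-t$ lies automatically in $(-\infty,v]$, and it lies in $[\delta u,v]$ as long as $t\le x_i-\delta u$, for which $t\le(1-\delta)u$ suffices. The assumption \eqref{e-e9} gives $T < (1-\delta)N^2\delta^{-N}\cdot N^{-2}\delta^N u = (1-\delta)u$, so for every $t\in[0,T]$ one has $x_i-t\in I$. Therefore $B_i$ simplifies to $\{\,t\in[0,T]: x_i-t\in I\setminus E\,\}$, which is a translated subset of $I\setminus E$, and hence
\[
\mathcal L(B_i)\;\le\;\mathcal L(I\setminus E)\;=\;\rho\,(v-\delta u).
\]

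The main computation is then to check that $N\rho(v-\delta u) < T$. Using the hypothesis $\delta^{N-1}v\le u$ from \eqref{e-e8}, one has $v-\delta u\le(\delta^{-(N-1)}-\delta)u$, so it suffices to verify
\[
N(\delta^{-(N-1)}-\delta)\;<\;(1-\delta)N^2\delta^{-N},
\]
which, after clearing $\delta^N$ and dividing by $1-\delta$, reduces to the elementary inequality $\delta+\delta^2+\cdots+\delta^N < N$. This holds since each term is strictly less than $1$, and this is the only place where the exponent $\delta^N$ in the density hypothesis is really being used.

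Combining, $\mathcal L\bigl(\bigcup_{i=1}^N B_i\bigr)\le N\rho(v-\delta u) < T$, so $[0,T]\setminus\bigcup_i B_i$ has positive Lebesgue measure and in particular is nonempty; any $t$ in this set works. I do not expect any real obstacle: the only thing to be careful about is keeping track of the two constraints (that $x_i-t$ stays inside $I$, and that $x_i-t$ avoids $I\setminus E$) and making sure that both are absorbed by the single budget $T$, which is exactly why the hypothesis on $\rho$ has the two factors $N^{-2}$ and $\delta^N$ rather than just $\delta^N$.
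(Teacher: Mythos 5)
Your proof is correct and is essentially the same argument as the paper's: both reduce the problem to the measure estimate $N\,\mathcal L(I\setminus E) < (1-\delta)N^2\delta^{-N}\rho u$, which after using $v\le \delta^{-(N-1)}u$ boils down to the elementary inequality $\delta+\delta^2+\cdots+\delta^N < N$. The paper phrases the conclusion via contradiction (covering $[0,T]$ by the translates $x_i-(I\setminus E)$), while you argue directly with the bad sets $B_i$, but the decomposition, the containment check $x_i-t\in I$, and the key inequality are identical.
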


\begin{proof}
Let $x_1,\ldots, x_N\in [u,v]$. By \eqref{e-e9},  $N^2\delta^{-N}\rho<1$.  Hence  $$x_1-t,\ldots, x_N-t\in [\delta u, v]=I$$
for every $t\in \left[0, (1-\delta)N^2\delta^{-N}\rho u\right]$.
Below we show by contradiction that there always exists $t\in \left[0, (1-\delta)N^2\delta^{-N}\rho u\right]$ such that $x_i-t\in E$ for all $1\leq i\leq N$.

Suppose on the contrary that the above conclusion is false. Then for each $t\in \left[0, (1-\delta)N^2\delta^{-N}\rho u\right]$, there exists $i$ such that $x_i-t\in I\setminus E$, or equivalently, $t\in x_i-(I\setminus E)$. Hence
$$
\left[0, (1-\delta)N^2\delta^{-N}\rho u\right]\subset \bigcup_{i=1}^N \left(x_i-(I\setminus E)\right).
$$
It follows that
\begin{equation}
\label{e-e7'}
(1-\delta)N^2\delta^{-N}\rho u\leq N \mathcal L(I\setminus E).
\end{equation}
However,
 \begin{align*}
  \mathcal L(I\setminus E)&=\mathcal L(I)\rho \qquad\qquad\qquad\quad \mbox{(by \eqref{e-e9})}\\
  &=(v-\delta u)\rho  \\
  &\leq \left(\delta^{-(N-1)}-\delta\right) \rho u \qquad\; \;\mbox{(by \eqref{e-e8})} \\
  &= (1-\delta)\left(\delta+\cdots+\delta^{N}\right) \delta^{-N}\rho u\\
  &<(1-\delta) N\delta^{-N}\rho u,
   \end{align*}
leading to a contradiction with  \eqref{e-e7'}.
\end{proof}

Now we are ready to prove Theorem \ref{t:BLE}.

\begin{proof}[Proof of Theorem \ref{t:BLE}]

Since $(a_n)_{n=1}^\infty$ is strictly decreasing and $\limsup_{n\to\infty} \frac{a_{n+N}}{a_n}<1$, there exists $\delta\in (0,1)$ such that
\begin{equation}\label{eq:eps}
	\frac{a_{n+N}}{a_n}<\delta^N \quad\mbox{ for all }\;n\ge1.
\end{equation}
Let $(n_k)_{k=0}^\infty$ be the increasing sequence of positive integers given by $n_0=1$ and
\begin{equation}\label{eq:nk}
	\{n_k\colon k\ge1\}=\bigl\{ n\geq 1\colon a_{n+1}/a_n<\delta \bigr\}.
\end{equation}
We claim that
\begin{equation}\label{eq:nk-}
	n_{k+1}-n_k\le N \quad\text{for all $k\ge1$}.
\end{equation}
Otherwise if  $n_{k+1}-n_k>N$ for some $k\ge1$, then by \eqref{eq:nk}, $$a_{j+1}/a_j\geq \delta$$ for $j=n_k+1,\ldots, n_{k}+N$, implying that
\[
	\frac{a_{n_k+1+N}}{a_{n_k+1}} =\prod_{j=n_k+1}^{n_k+N} \frac{a_{j+1}}{a_{j}}\geq \delta^N,
 \]
which contradicts~\eqref{eq:eps}. This proves \eqref{eq:nk-}.

To simplify the notation, for $k\geq 1$ we write
\begin{equation}\label{eq:Ik}
	u_k=a_{n_k},\quad v_k=a_{n_{k-1}+1},\quad \mbox{and}\quad I_k=[ \delta u_k,v_k].
\end{equation}
By~\eqref{eq:nk} and~\eqref{eq:nk-},
\begin{equation}\label{eq:betak1}
	\frac{v_{k+1}}{u_k} =\frac{a_{n_k+1}}{a_{n_k}}<\delta,
\end{equation}
and \begin{equation}
\label{eq:betak}
	1\geq \frac{u_k}{v_k}=\frac{a_{n_k}}{a_{n_{k-1}+1}}= \prod_{j=n_{k-1}+1}^{n_k-1}\frac{a_{j+1}}{a_j} \ge \delta^{n_k-n_{k-1}-1}\ge \delta^{N-1}.
\end{equation}
It follows from \eqref{eq:betak1} that $I_k\cap I_{k+1}= \varnothing$. As $(v_k)_{k=1}^\infty$ is monotone decreasing,  the intervals $I_k$ are disjoint.

Let $E\subset\R$ be measurable with positive Lebesgue measure. Replacing $E$ by its suitable translation if necessary, we may assume that $0$ is a Lebesgue density point of~$E$, that is,
\[ \lim_{r\to 0}\frac{\mathcal L([0,r]\cap E)}{r}=1. \]
 Then $\lim_{r\to 0}{\mathcal L( [0,r]\setminus E)}/{r}=0$. Since  $\lim_{k\to \infty} v_k=0$, it follows that
 $$\frac{\mathcal L(I_k\setminus E)}{\mathcal L(I_k)}\leq \frac{\mathcal L([0, v_k]\setminus E)}{\mathcal L(I_k)}= \frac{\mathcal L([0, v_k]\setminus E)}{v_k}\cdot \frac{v_k}{v_k-\delta u_k}\leq \frac{\mathcal L([0, v_k]\setminus E)}{v_k}\cdot \frac{1}{1-\delta}\to 0$$
 as $k\to \infty$. Write
  \begin{equation}\label{eq:rhok}
  \rho_k:=\frac{\mathcal L( I_k\setminus E)}{\mathcal L(I_k)},\quad k\geq 1.
  \end{equation}
Choose a large integer $p$ such that
\begin{equation}
\label{e-e10}
 \rho_k<N^{-2}\delta^N \quad \mbox{ for all }\;k\geq  p.
\end{equation}
Since $0$ is a density point of $E$ we may also assume that $\mathcal L(E\cap (v_p,\infty))>0$.

Next we construct a strictly decreasing sequence $(b_n)_{n=1}^\infty$ of positive numbers such that $b_n\in E$ for $n\geq 1$, $\lim_{n\to \infty} b_n=0$, and moreover,
\begin{equation}
\label{e-e11}
 \lim_{n\to \infty}\frac{b_n-b_{n+1}}{a_n-a_{n+1}}=1.
\end{equation}
To this end, we first arbitrarily choose a strictly decreasing sequence $(b_j)_{j=1}^{n_p}$ of positive numbers  from $E\cap (v_p,\infty)$.  Then for each $k\geq p+1$,
by \eqref{eq:betak}, \eqref{e-e10} and Lemma \ref{l:Ik} (in which we take $I=I_k$),  we can find $0\leq t_k\leq (1-\delta)N^2\delta^{-N}\rho_k u_k$ such that
$$
a_j-t_k\in E\cap I_k \quad \mbox{ for  all }\;  n_{k-1} +1\leq j\leq n_{k}.
$$
Define $b_j=a_j-t_k$ for $ n_{k-1} +1\leq j\leq n_{k}$. In this way, we obtain the sequence $(b_n)_{n=1}^\infty$.  See Figure \ref{fig:a-t} for an illustration of our construction.

\medskip

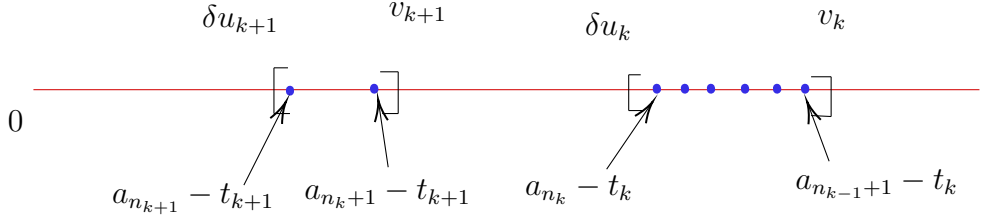
\begin{figure}[h]
\begin{tikzpicture}[x=0.75pt,y=0.75pt,yscale=-1,xscale=1]

\draw [color={rgb, 255:red, 212; green, 10; blue, 10 }  ,draw opacity=1 ]   (117,114) -- (589,114) ;
\draw    (414,106.5) -- (414,124.5) ;
\draw    (414,106.5) -- (420,106.5) ;
\draw    (414,124.5) -- (421,124.5) ;
\draw    (515,107.5) -- (515,127.5) ;
\draw    (515,107.5) -- (505,107.5) ;
\draw    (515,127.5) -- (505,127) ;
\draw    (237,103) -- (237,126) ;
\draw    (237,103) -- (244,103) ;
\draw    (237,126) -- (245,126) ;
\draw    (299,105) -- (299,126) ;
\draw    (299,105) -- (290,105) ;
\draw    (299,126) -- (291,126) ;
\draw    (117,114) ;
\draw  [color={rgb, 255:red, 54; green, 45; blue, 229 }  ,draw opacity=1 ][line width=3] [line join = round][line cap = round] (287,113) .. controls (287,113.33) and (287.33,114) .. (287,114) ;
\draw  [color={rgb, 255:red, 55; green, 46; blue, 229 }  ,draw opacity=1 ][line width=3] [line join = round][line cap = round] (245,115) .. controls (245,114.67) and (245,114.33) .. (245,114) ;
\draw  [color={rgb, 255:red, 54; green, 45; blue, 229 }  ,draw opacity=1 ][line width=3] [line join = round][line cap = round] (502,113) .. controls (502,113.33) and (502.33,114) .. (502,114) ;
\draw  [color={rgb, 255:red, 54; green, 45; blue, 229 }  ,draw opacity=1 ][line width=3] [line join = round][line cap = round] (488,113) .. controls (488,113.33) and (488.33,114) .. (488,114) ;
\draw  [color={rgb, 255:red, 54; green, 45; blue, 229 }  ,draw opacity=1 ][line width=3] [line join = round][line cap = round] (428,113) .. controls (428,113.33) and (428.33,114) .. (428,114) ;
\draw  [color={rgb, 255:red, 54; green, 45; blue, 229 }  ,draw opacity=1 ][line width=3] [line join = round][line cap = round] (442,113) .. controls (442,113.33) and (442.33,114) .. (442,114) ;
\draw  [color={rgb, 255:red, 54; green, 45; blue, 229 }  ,draw opacity=1 ][line width=3] [line join = round][line cap = round] (455,113) .. controls (455,113.33) and (455.33,114) .. (455,114) ;
\draw  [color={rgb, 255:red, 54; green, 45; blue, 229 }  ,draw opacity=1 ][line width=3] [line join = round][line cap = round] (472,113) .. controls (472,113.33) and (472.33,114) .. (472,114) ;
\draw    (294,153) -- (289.29,120.98) ;
\draw [shift={(289,119)}, rotate = 81.63] [color={rgb, 255:red, 0; green, 0; blue, 0 }  ][line width=0.75]    (10.93,-3.29) .. controls (6.95,-1.4) and (3.31,-0.3) .. (0,0) .. controls (3.31,0.3) and (6.95,1.4) .. (10.93,3.29)   ;
\draw    (403,154) -- (424.88,121.66) ;
\draw [shift={(426,120)}, rotate = 124.08] [color={rgb, 255:red, 0; green, 0; blue, 0 }  ][line width=0.75]    (10.93,-3.29) .. controls (6.95,-1.4) and (3.31,-0.3) .. (0,0) .. controls (3.31,0.3) and (6.95,1.4) .. (10.93,3.29)   ;
\draw    (514,150) -- (503.69,121.88) ;
\draw [shift={(503,120)}, rotate = 69.86] [color={rgb, 255:red, 0; green, 0; blue, 0 }  ][line width=0.75]    (10.93,-3.29) .. controls (6.95,-1.4) and (3.31,-0.3) .. (0,0) .. controls (3.31,0.3) and (6.95,1.4) .. (10.93,3.29)   ;
\draw    (222,162) -- (242.11,121.79) ;
\draw [shift={(243,120)}, rotate = 116.57] [color={rgb, 255:red, 0; green, 0; blue, 0 }  ][line width=0.75]    (10.93,-3.29) .. controls (6.95,-1.4) and (3.31,-0.3) .. (0,0) .. controls (3.31,0.3) and (6.95,1.4) .. (10.93,3.29)   ;

\draw (391,74) node [anchor=north west][inner sep=0.75pt]   [align=left] {$\displaystyle \delta u _{k}$};
\draw (507,73) node [anchor=north west][inner sep=0.75pt]   [align=left] {$\displaystyle v_{k}$};
\draw (200,70) node [anchor=north west][inner sep=0.75pt]   [align=left] {$ \displaystyle \delta u _{k+1}$};
\draw (293,69) node [anchor=north west][inner sep=0.75pt]   [align=left] {$\displaystyle v_{k+1}$};
\draw (103,123) node [anchor=north west][inner sep=0.75pt]   [align=left] {0};
\draw (496,153) node [anchor=north west][inner sep=0.75pt]   [align=left] {$\displaystyle a_{n_{k-1} +1} -t_{k}{}$};
\draw (361,156) node [anchor=north west][inner sep=0.75pt]   [align=left] {$\displaystyle a_{n_{k}} -t_{k}{}$};
\draw (251,158) node [anchor=north west][inner sep=0.75pt]   [align=left] {$\displaystyle a_{n_{k} +1} -t_{k+1}{}$};
\draw (155,160) node [anchor=north west][inner sep=0.75pt]   [align=left] {$\displaystyle a_{n_{k+1}} -t_{k+1}{}$};
\end{tikzpicture}
 \caption{An illustration of the intervals $I_k$ and $I_{k+1}$ and the points $b_n$ that we are going to choose. Inside each interval,  there are at most $N$ points. }
    \label{fig:a-t}
\end{figure}

 Clearly, $b_n\in E$ for $n\geq 1$ and $\lim_{n\to \infty} b_n=0$.  To see \eqref{e-e11}, by the definition of $(b_n)_{n=1}^\infty$, we see that for each $k\geq p+1$,
 \begin{equation}
 \label{e-e12}
 \frac{b_j-b_{j+1}}{a_j-a_{j+1}}=\left\{
 \begin{array}{ll}
  1, & \mbox{ if }n_{k-1} +1\leq j\leq n_{k}-1,\\
 \displaystyle \frac{a_{n_k}-a_{n_k+1} +t_{k+1}-t_k}{a_{n_k}-a_{n_k+1}}, & \mbox{ if }j= n_{k}.
 \end{array}
 \right.
 \end{equation}
 Recall that  $a_{n_k+1}<\delta a_{n_k}$ (see \eqref{eq:nk}), and  $0\leq t_k\leq (1-\delta)N^2\delta^{-N}\rho_k a_{n_k}$, so
 $$
\frac{|t_{k+1}-t_k|}{a_{n_k}-a_{n_k+1}}\leq \frac{t_k+t_{k+1}}{a_{n_k}-a_{n_k+1}}\leq (1-\delta)^{-1}\frac{t_k+t_{k+1}}{a_{n_k}}\leq N^2\delta^{-N}(\rho_k+\rho_{k+1}) \to 0
 $$
as $k\to \infty$. Combining this with \eqref{e-e12} yields \eqref{e-e11}.

Finally we show that the sequence $(b_n)_{n=1}^\infty$ is the image of $(a_n)_{n=1}^\infty$ under a bi-Lipschtiz map.  To see it, define a mapping $f:\R\to \R$ by
\begin{equation*}\label{eq:bilip}
	f(x)=\begin{cases}
		x, & \text{if $x\le0$}, \\
		\displaystyle\frac{a_n-x}{a_n-a_{n+1}}\cdot b_{n+1} + \frac{x-a_{n+1}}{a_n-a_{n+1}}\cdot b_n, & \text{if $a_{n+1}\le x\le a_n$}, \\
		x-a_1+b_1, & \text{if $x>a_1$}.
	\end{cases}
\end{equation*}
Clearly $f(a_n)=b_n$ for $n\geq 1$,  and $f$ is a continuous piecewise linear map, with slope $1$ on $(-\infty, 0]\bigcup (a_1,\infty)$,
and $(b_n-b_{n+1})/(a_n-a_{n+1})$ on $[a_{n+1}, a_n]$ for $n\geq 1$.
By \eqref{e-e11},  $f'(0)=1$, and all these slopes are positive, uniformly bounded away from zero, and from above by a constant, thus $f$ is bi-Lipschitz on $\R$.
\end{proof}
\medskip

\section{ The proofs of Theorems~\ref{p:nonemb} and \ref{t:iff}}
\label{S-3}

In this section we prove Theorems~\ref{p:nonemb} and \ref{t:iff}. We begin with the following.

\begin{lem}\label{lem-3.1}
	Let $(a_n)_{n=1}^\infty$ be a strictly decreasing sequence of positive numbers such that
	$$\lim_{k\to \infty}  a_n=0\quad\mbox{and} \quad \lim_{n\to \infty} \frac{a_{n+1}}{a_n}=1.$$
	 Then there exists a subsequence~$(a_{n_k})_{k=1}^\infty$  such that $\lim_{k\to \infty}a_{n_{k+1}}/a_{n_k}=1$ and
	\begin{equation}\label{eq:dis}
	a_{n_k}- a_{n_{k+1}}\leq 2 (a_{n_m}- a_{n_{m+1}})	\quad\mbox{ for all }\; k,m\in \N \mbox{ with }\; k> m.
	\end{equation}
	\end{lem}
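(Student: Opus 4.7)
The plan is to construct $(n_k)$ inductively by controlling the successive gaps $d_k := a_{n_k} - a_{n_{k+1}}$ through an auxiliary non-increasing positive sequence $\delta_k$ chosen so that $\delta_k/a_{n_k} \to 0$ and $\delta_k/2 \leq d_k < \delta_k$ for every $k$. Granted these, the gap bound is immediate: for $k > m$,
\[ d_k < \delta_k \leq \delta_m \leq 2 d_m,\]
and the ratio condition follows from $d_k/a_{n_k} < \delta_k/a_{n_k} \to 0$.

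Write $g_n := a_n - a_{n+1}$ and $M_n := \sup_{m \geq n} g_m$. The hypothesis $a_{n+1}/a_n \to 1$ is equivalent to $g_n/a_n \to 0$; since $a_m \leq a_n$ for $m \geq n$, this gives $M_n/a_n \to 0$, while $M_n$ is non-increasing in $n$. Fix $n_1$ large enough that $M_n/a_n < 1/4$ for all $n \geq n_1$, and then define recursively
\[ \delta_k := \frac{a_{n_k}}{k} + 2 M_{n_k}, \qquad n_{k+1} := \min\{n > n_k : a_n \leq a_{n_k} - \delta_k/2\}.\]
The minimum is well defined because $\delta_k/2 \leq a_{n_k}/2 + M_{n_k} < 3a_{n_k}/4 < a_{n_k}$ and $a_n \to 0$.

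The required properties are straightforward checks. The bound $d_k \geq \delta_k/2$ is immediate from the definition of $n_{k+1}$. For the upper bound, minimality of $n_{k+1}$ gives $a_{n_{k+1}-1} > a_{n_k} - \delta_k/2$, so
\[ d_k < \tfrac{\delta_k}{2} + g_{n_{k+1}-1} \leq \tfrac{\delta_k}{2} + M_{n_k} = \frac{a_{n_k}}{2k} + 2 M_{n_k} < \delta_k.\]
Monotonicity $\delta_{k+1} \leq \delta_k$ follows from $a_{n_{k+1}} \leq a_{n_k}$, $M_{n_{k+1}} \leq M_{n_k}$, and $1/(k+1) < 1/k$; moreover $\delta_k/a_{n_k} = 1/k + 2 M_{n_k}/a_{n_k} \to 0$. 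That $(a_{n_k})$ is a genuine subsequence (i.e., $n_k \to \infty$) follows from $a_{n_{k+1}} \leq a_{n_k}(1 - 1/(2k))$ and the divergence of $\sum 1/(2k)$, which force $a_{n_k} \to 0$.

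The main obstacle is reconciling two competing demands: the ratio requirement asks $\delta_k \ll a_{n_k}$, while the greedy selection of $n_{k+1}$ can overshoot $a_{n_k} - \delta_k/2$ by as much as $g_{n_{k+1}-1} \leq M_{n_k}$. The $2 M_{n_k}$ buffer in $\delta_k$ absorbs this overshoot while still leaving $\delta_k/a_{n_k} \to 0$, a cancellation made possible precisely by the hypothesis $a_{n+1}/a_n \to 1$, which forces $M_n/a_n \to 0$.
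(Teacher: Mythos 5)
Your proof is correct and follows essentially the same strategy as the paper: define $M_n=\sup_{m\ge n}(a_m-a_{m+1})$ (the paper's $t_n$), select $n_{k+1}$ greedily with a threshold comparable to $M_{n_k}$, and control the overshoot by one extra gap $g_{n_{k+1}-1}\le M_{n_k}$. The cushion term $a_{n_k}/k$ and the divergent-series argument for $a_{n_k}\to 0$ are both superfluous — the paper uses the threshold $t_{n_k}$ alone, and $n_k\to\infty$ is automatic since $(n_k)$ is a strictly increasing sequence of integers — but these extras do not affect the correctness of your argument.
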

\begin{proof}
Write for  $n\in \N$,
$$
t_n=\sup\{a_p-a_{p+1}:\; p\geq n\}.
$$
Clearly, $(t_n)$ is monotone decreasing. Since the sequence $(a_n)$ is strictly decreasing with limit $0$, the supremum in the above equality is attainable for each $n$; that is, for each $n$ there exists $p(n)\in \N$ such that $$p(n)\geq n\quad \mbox{ and }\quad t_n=a_{p(n)}-a_{p(n)+1}.$$

Next we inductively define a subsequence $(n_k)_{k=1}^\infty$ of natural numbers. Set $n_1=1$. Suppose $n_1,\ldots, n_k$ have been defined. Then we define
\begin{equation}
\label{e-e1}
n_{k+1}=\inf\{p\in \N\colon p>n_k,\;  a_{n_k}-a_p\geq t_{n_k}\}.
\end{equation}
Since $a_{n_k}-a_{p(n_k)+1}\geq a_{p(n_k)}-a_{p(n_k)+1}=t_{n_k}$,  it follows from \eqref{e-e1} that
$$n_k<n_{k+1}\leq p(n_k)+1<\infty.$$   Continuing this process, we obtain the sequence $(n_k)_{k=1}^\infty$.

By \eqref{e-e1}, $a_{n_k}-a_{n_{k+1}}\geq t_{n_k}$. Moreover, by \eqref{e-e1} and the definition of $t_{n_k}$,
$$
a_{n_k}-a_{n_{k+1}}= (a_{n_{k}}-a_{n_{k+1}-1})+(a_{n_{k+1}-1}-a_{n_{k+1}})\leq t_{n_{k}}+t_{n_{k}}=2t_{n_{k}}.
$$
That is,
\begin{equation}
\label{e-e2}
t_{n_{k}}\leq a_{n_{k}}-a_{n_{k+1}}\leq 2 t_{n_{k}}\quad  \mbox{  for all }k\geq 1.
\end{equation}
It follows that for any $k,m\in \N$ with $k>m$,
$$a_{n_{k}}-a_{n_{k+1}}\leq 2 t_{n_{k}}\leq 2 t_{n_m}\leq 2(a_{n_{m}}-a_{n_{m+1}}).$$

Finally we show that $\lim_{k\to \infty} a_{n_{k+1}}/a_{n_k}=1$, which is equivalent to  $$\lim_{k\to \infty} (a_{n_k}-a_{n_{k+1}})/a_{n_k}=0.$$
Notice that by \eqref{e-e2}, $a_{n_k}-a_{n_{k+1}}\leq 2t_{n_k}=2(a_{p(n_k)}-a_{p(n_k)+1})$,  and
$a_{n_k}\geq a_{p(n_k)}$. Hence
$$
0\leq \frac{a_{n_k}-a_{n_{k+1}}}{a_{n_k}}\leq \frac{2(a_{p(n_k)}-a_{p(n_k)+1})}{a_{p(n_k)}}=2\left(1-\frac{a_{p(n_k)+1}}{a_{p(n_k)}}\right)\to 0,
$$
as desired.
\end{proof}
\begin{prop}\label{prop-3.2}
	Let $(a_n)_{n=1}^\infty$ be a strictly decreasing sequence of positive numbers with $a_n\to0$ as $n\to\infty$. Suppose that
	\[ \sup_{m>n>1}\frac{a_{m-1}-a_m}{a_{n-1}-a_n}<\infty, \quad\mbox{ and }\quad  \limsup_{n\to\infty}\frac{a_{n+1}}{a_n}=1. \]
	Then there exists a  compact set $E\subset\R$ with positive Lebesgue measure such that $(a_n)_{n=1}^\infty$ can not be bi-Lipschitz embedded into~$E$.
\end{prop}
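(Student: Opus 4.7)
The plan is to extend the Eigen--Falconer construction to the bi-Lipschitz setting by exploiting the ``near-arithmetic-progression'' block structure guaranteed by the hypotheses. Specifically, let $p_j \to \infty$ be indices witnessing $\limsup a_{n+1}/a_n = 1$, so that $\epsilon_j := (a_{p_j} - a_{p_j+1})/a_{p_j} \to 0$. The bounded-difference hypothesis~\eqref{e-eT}, with constant $C := \sup_{m>n>1}(a_{m-1}-a_m)/(a_{n-1}-a_n)$, yields $a_k - a_{k+1} \le C \epsilon_j a_{p_j}$ for all $k > p_j$. Summing, the block $\{a_{p_j}, a_{p_j+1}, \dots, a_{p_j + R_j}\}$ with $R_j := \lfloor 1/(4C\epsilon_j) \rfloor$ consists of $R_j + 1 \to \infty$ consecutive terms of $(a_n)$ lying in $[a_{p_j}/2,\, a_{p_j}]$, forming a near-arithmetic progression with common difference $O(\epsilon_j a_{p_j})$. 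Any $L$-bi-Lipschitz image of such a block has the analogous structure at scale comparable to $a_{p_j}$ near the limit point.

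For each integer $L \ge 2$, I will construct a compact set $E_L \subset [0,1]$ of Lebesgue measure at least $1 - 2^{-L}$ as the complement of a carefully placed periodic family of open intervals designed so that no $L$-bi-Lipschitz image of any such block fits in $E_L$. Concretely, at scale $a_{p_j}$ (restricted to a sufficiently sparsified subsequence, to control the total removed measure) I would remove from each sub-interval of $[0,1]$ of length comparable to $L a_{p_j}$ a union of equally spaced open intervals of total relative length $O(L^3 \epsilon_j)$, with spacing tuned so that the $R_j + 1$ points of an $L$-bi-Lipschitz block cannot avoid them all. Setting $E := \bigcap_{L \ge 2} E_L$ then gives a compact set of positive Lebesgue measure; for any $L$-bi-Lipschitz $f$ with $f(a_n) \in E$ for all $n$, the inclusion $f(\{a_{p_j}, \dots, a_{p_j+R_j}\}) \subset E \subset E_L$ would contradict the construction of $E_L$ at scale $a_{p_j}$ for large $j$.

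The main technical difficulty is the calibration of the gap widths and of the sparsification of $(p_j)$ so that $E_L$ simultaneously rules out $L$-bi-Lipschitz near-AP blocks of length $R_j$ at every scale $a_{p_j}$ near every potential Lebesgue density point, while retaining $\mathcal L(E_L) \ge 1 - 2^{-L}$. This requires quantitative Steinhaus-type density estimates at each scale, combined with thinning $(p_j)$ so aggressively that the contributions $L^3 \epsilon_j$ sum to less than $2^{-L}$. The bounded-difference hypothesis~\eqref{e-eT} is essential throughout: without it, the near-AP block structure around each $p_j$ could be arbitrarily short, and no fixed countable family of gap-sets would suffice to obstruct bi-Lipschitz embeddings.
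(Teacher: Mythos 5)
Your high-level architecture matches the paper's: use the bounded-difference hypothesis to get a constant $C$ controlling all later gaps, pick scales where $\epsilon_j = (a_{p_j}-a_{p_j+1})/a_{p_j}\to 0$ along a sparse subsequence, build $E$ as a countable intersection of sets obtained from $[0,1]$ by removing small periodic open intervals, and derive a contradiction from a long block of $a_n$'s lying in $[a_{p_j}/2,a_{p_j}]$. The construction of $E$, the measure bookkeeping, and the role of hypothesis~\eqref{e-eT} are all in the right place.

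However, the crucial step — showing that an arbitrary $L$-bi-Lipschitz image of the block must meet one of the removed gaps — is deferred to an unspecified ``Steinhaus-type density estimate,'' and as sketched it does not go through. Two problems. First, your block $\{a_{p_j},\dots,a_{p_j+R_j}\}$ need not be a near-arithmetic progression: hypothesis~\eqref{e-eT} yields only the upper bound $a_k-a_{k+1}\le C\epsilon_j a_{p_j}$, with no lower bound on the later gaps, so the block (and hence its bi-Lipschitz image) can be arbitrarily compressed and an equispaced family of gaps tuned to a ``common difference $O(\epsilon_j a_{p_j})$'' has no reason to catch it. Second, and more fundamentally, the finite-set Steinhaus argument obstructs translated/dilated copies, a two-parameter family; an $L$-bi-Lipschitz image of an $(R_j+1)$-point block lives in an $R_j$-dimensional parameter region, and no fixed small-measure removal of relative size $O(L^3\epsilon_j)$ per scale can be guaranteed to intersect all such configurations by a density count alone. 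This is precisely the obstruction that makes the bi-Lipschitz version harder than the affine version, and your sketch does not address it.

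The paper sidesteps both issues with a different, simpler mechanism. It sets $\ell_k\approx k/a_{n_k}$, $\delta_k=k(a_{n_k}-a_{n_k+1})$, builds $E_k$ by removing from $[0,1]$ the $\ell_k+1$ intervals $(j/\ell_k-\delta_k/2,\,j/\ell_k+\delta_k/2)$, and takes $E=\bigcap_k E_k$. Given an $L$-bi-Lipschitz $f$ with $f(a_n)\in E$, pick $k>CL$; then for every $m\ge n_k$, $|b_m-b_{m+1}|\le L(a_m-a_{m+1})\le CL(a_{n_k}-a_{n_k+1})<\delta_k$, i.e.\ \emph{every} consecutive gap of the image tail is smaller than the width of every removed interval of $E_k$. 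Since the tail converges to $b_\infty\in E_k$, it can never jump across a removed interval, so $(b_m)_{m\ge n_k}$ together with $b_\infty$ all lie in a single component of $E_k$, which has length $<1/\ell_k$. But the lower bi-Lipschitz bound gives $|b_{n_k}-b_\infty|\ge a_{n_k}/L>a_{n_k}/k\ge 1/\ell_k$, a contradiction. This ``trap the whole tail plus the limit point in one component'' argument uses only the upper bound on gaps and the lower Lipschitz bound at a single pair of scales, requires no positional/Steinhaus control, and is what your write-up is missing. If you replace your deferred density step with this trapping argument (and re-index by $k$ rather than by $L$, since for a fixed $E=\bigcap_k E_k$ one simply chooses $k>CL$ once $L$ is given), the proof closes.
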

\begin{proof}
	By our assumption, there exists $C>1$ such that \begin{equation}\label{e-e3}
a_n-a_{n+1}\leq C (a_m-a_{m+1})\quad  \mbox{ for all } n,m \mbox{ with }n>m.
\end{equation}
Since $$\liminf_{n\to \infty} \frac{a_n-a_{n+1}}{a_n}=1-\limsup_{n\to \infty} \frac{a_{n+1}}{a_n}=0,$$   we can choose a  strictly increasing sequence $(n_k)_{k=1}^\infty$ of natural numbers such that
$$
\frac{a_{n_k}-a_{n_{k}+1}}{a_{n_k}}\leq k^{-2}4^{-k}\quad \mbox{ for }\;k\geq 1.
$$
For each $k\geq 1$, let $\ell_k$ be the smallest integer $\geq k/a_{n_k}$, and let $\delta_k=k(a_{n_k}-a_{n_k+1})$, Clearly,  it holds that
$$
\frac{1}{ \ell_k}\leq \frac{a_{n_k}}{k}<\frac{2}{\ell_k}
$$
and
\begin{equation}
\label{e-e4}
\ell_k\delta_k\leq \frac{2k}{a_{n_k}}\cdot k(a_{n_k}-a_{n_k+1})\leq  2\cdot 4^{-k}\quad \mbox{ for }k\geq 1.
\end{equation}

Define a sequence $(E_k)_{k=1}^\infty$ of compact subsets of $[0,1]$ by
$$
E_k=[0,1]\setminus \bigcup_{j=0}^{\ell_k}\left(\frac{j}{\ell_k}-\frac{\delta_k}{2},\; \frac{j}{\ell_k}+\frac{\delta_k}{2}\right).
$$
It is easy to see that for each $k$,  $E_k$ is the union of $\ell_k$ disjoint intervals of length $(1-\delta_k\ell_k)/\ell_k$, with a gap of length $\delta_k$ between any two adjacent intervals.

Set $E=\bigcap_{k=1}^\infty E_k$. Then $E$ is a compact set with Lebesgue measure
$$
\mathcal L (E)\geq 1-\sum_{k=1}^\infty\mathcal L\left([0,1]\setminus E_k]\right)\geq 1-\sum_{k=1}^\infty  \ell_k\delta_k\geq 1-\sum_{k=1}^\infty 2\cdot 4^{-k}=\frac13>0,
$$
where we have used \eqref{e-e4} in the third inequality.
Below we show by contradiction that $(a_n)_{n=1}^\infty$ can not be embedded into $E$ by a bi-Lipschitz map.

Suppose on the contrary that $(a_n)_{n=1}^\infty$ can be embedded into~$E$ by a bi-Lipschitz map~$f:\R\to \R$. Let $b_n=f(a_n)$ for $n\geq 1$ and $b_\infty=\lim_{n\to \infty} b_n$. Then $b_n,\; b_\infty\in E$.  Clearly $b_\infty=f(0)$,  and $(b_n)_{n=1}^\infty$ is strictly monotone increasing or monotone decreasing. Since $f$ is bi-Lipschitz,  there exists a constant $L>1$ such that
\begin{equation*}
\label{e-e5'}
L^{-1} \le \frac{|b_n-b_m|}{a_n-a_m}\le L \quad \mbox{ for all } n, m\in \N, n\neq m.
\end{equation*}
 In particular, this implies that
\begin{equation}\label{e-e5}
	L^{-1} \le \frac{|b_n-b_{n+1}|}{a_{n}-a_{n+1}} \le L	 \quad\text{and}\quad L^{-1} \le \frac{|b_n-b_\infty|}{a_n} \le L.
	\end{equation}
Now fix an integer $k>CL$.
 Then by (\ref{e-e5}) and \eqref{e-e3},  for all $m\ge n_k$,
\begin{equation}
\label{e-e6}
|b_m-b_{m+1}| \le  L (a_m-a_{m+1}) \leq CL (a_{n_k}-a_{n_k+1})<k (a_{n_k}-a_{n_k+1})=\delta_k.
\end{equation}
Meanwhile by \eqref{e-e5},
\begin{equation}
\label{e-e7}
|b_{n_k}-b_\infty|\geq \frac{a_{n_k}}{L}> \frac{a_{n_k}}{k}\geq \frac{1}{\ell_k}.
\end{equation}	
Notice that $(b_m)_{m=n_k}^\infty\subset E\subset E_k$. Recall that $E_k$ is the union of $\ell_k$ disjoint intervals of length $(1-\delta_k\ell_k)/\ell_k$, with a gap of length $\delta_k$ between any two adjacent intervals.  By \eqref{e-e6}, the sequence $(b_m)_{m=n_k}^\infty$ must be entirely contained in a component interval of $E_k$.    This forces that $|b_m-b_\infty|\leq (1-\delta_k\ell_k)/\ell_k$, which clearly contradicts \eqref{e-e7}.
\end{proof}
Now we ready to prove Theorems \ref{p:nonemb} and \ref{t:iff}.

\begin{proof}[Proof of Theorem~\ref{p:nonemb}]
It follows directly from Lemma \ref{lem-3.1} and Proposition \ref{prop-3.2}.
\end{proof}

\begin{proof}[Proof of Theorem~\ref{t:iff}]
The sufficiency part of the theorem follows from Theorem~\ref{t:BLE} and the necessity part follows from Proposition~\ref{prop-3.2}.
\end{proof}

\section{The proof of Theorem \ref{th:uniform}}\label{S-4}

We will prove Theorem \ref{th:uniform} in this section. To start, we will need to study whether we can bi-Lipschitz embed certain sequences with bi-Lipschitz bounds independent of the measurable sets. The following theorem provides a quantitative result in this direction, which may be of independent interest for other future study.

\begin{thm}\label{th:uniform-biLip}
	Let $A = (a_n)_{n=1}^{\infty}$ be a sequence of positive numbers such that
	\begin{equation}\label{eq_sum_condition}
		\delta:=a_1+\sum_{n=1}^{\infty} \frac{a_{n+1}}{a_n} <1/8.
	\end{equation}
	Then for any measurable set $E\subset [0,1]$ with $\mathcal{L}(E)>\frac{1}{2}+4\delta$, there exists a bi-Lipschitz map $f:\R\to\R$ such that $f(A)\subset E$ and
	\[ \frac{1}{2}|x-y|\le |f(x)-f(y)|\le \frac{3}{1-\delta} |x-y|. \]
\end{thm}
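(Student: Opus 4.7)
Following the template of Theorem~\ref{t:BLE}, the task reduces to producing a point $b_\infty\in\overline E$ and a strictly decreasing sequence $(b_n)_{n=1}^\infty\subset E$ with $b_n\searrow b_\infty$ and
\[ \tfrac12(a_n-a_{n+1})\le b_n-b_{n+1}\le\tfrac{3}{1-\delta}(a_n-a_{n+1})\qquad(n\ge1). \]
Given such a sequence, the piecewise-linear map $f\colon\R\to\R$ determined by $f(0)=b_\infty$, $f(a_n)=b_n$, linear on each $[a_{n+1},a_n]$, and slope $1$ on $(-\infty,0]\cup[a_1,\infty)$ has every slope in $[1/2,\,3/(1-\delta)]$ (noting $1$ lies in that interval), so $f$ is bi-Lipschitz with the stated constants; telescoping gives $b_n-b_\infty\in[a_n/2,\,3a_n/(1-\delta)]$, which handles continuity at~$0$.

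To construct the $b_n$, set $\eta_n:=\tfrac14(a_n-a_{n+1})$. Using $a_{n+1}-a_{n+2}\le a_{n+1}<\delta a_n\le\tfrac{\delta}{1-\delta}(a_n-a_{n+1})$ together with $\delta<1/8$, one verifies that $\eta_n+\eta_{n+1}\le\tfrac12(a_n-a_{n+1})$. Consequently, if one can find $b_\infty\in[0,\,1-a_1]$ such that the window
\[ W_n:=[\,b_\infty+a_n-\eta_n,\;b_\infty+a_n+\eta_n\,] \]
meets $E$ for every $n\ge1$, then picking any $b_n\in W_n\cap E$ gives $|(b_n-b_{n+1})-(a_n-a_{n+1})|\le\eta_n+\eta_{n+1}\le\tfrac12(a_n-a_{n+1})$, so each slope $(b_n-b_{n+1})/(a_n-a_{n+1})$ lies in $[1/2,\,3/2]\subset[1/2,\,3/(1-\delta)]$, as required.

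The heart of the argument is thus the existence of such a $b_\infty$. Let
\[ B_n:=\{\,b_\infty\in[0,1-a_1]:W_n\cap E=\varnothing\,\}, \]
a translate by $-a_n$ of the $\eta_n$-interior of $[0,1]\setminus E$. A naive per-scale bound $\mathcal L(B_n)\le 1-\mathcal L(E)<\tfrac12-4\delta$ is too weak to sum over $n$. The plan is to exploit \emph{both} hypotheses simultaneously: (i) $\mathcal L(E)>\tfrac12+4\delta$ gives a reservoir of admissible base-points of measure $\ge\mathcal L(E)-a_1>\tfrac12+3\delta$, while (ii) the summability $a_1+\sum_n a_{n+1}/a_n<\delta$ forces the shifts $a_n$ to be small and to cluster geometrically at $0$, so that $\bigcup_n B_n$ is essentially contained in a fattening of $[0,1]\setminus E$ of total measure $<\tfrac12-3\delta$. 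The resulting positive gap supplies a good $b_\infty$.

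The main obstacle is precisely this last measure estimate. When $[0,1]\setminus E$ is fragmented into many small components comparable in size to some $a_n$, the crude fattening bound $\mathcal L(E^c+[-a_1,0])\le\mathcal L(E^c)+a_1\cdot\#\{\text{components}\}$ can be disastrously large; controlling $\mathcal L(\bigcup_n B_n)$ in this regime requires a scale-indexed stratification of the gaps of $E$ and a careful amortization using the full strength of $\sum_n a_{n+1}/a_n<\delta$. This bookkeeping — together with the precise choice of the $\eta_n$'s matched to the decay rate — is where the constants $1/8$ and $4\delta$ in the hypothesis enter, and is where I expect most of the technical work to sit.
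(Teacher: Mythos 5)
Your reduction at the start is correct and matches the final step of the paper's proof: once a decreasing sequence $(b_n)\subset E$ with $b_n\searrow b_\infty$ and $\frac12(a_n-a_{n+1})\le b_n-b_{n+1}\le\frac{3}{1-\delta}(a_n-a_{n+1})$ is in hand, the piecewise-linear interpolation has slopes in $[1/2,3/(1-\delta)]$ and gives the required bi-Lipschitz map. Your computation that the windows $W_n$ of half-width $\eta_n=\frac14(a_n-a_{n+1})$ yield slopes in $[1/2,3/2]$ is also correct.

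However, the core of the theorem --- actually producing the $b_n$'s --- is precisely where the proposal stops. You correctly observe that the naive bound $\mathcal L(B_n)\le\mathcal L(E^c)$ does not sum, and then state that closing the gap ``requires a scale-indexed stratification of the gaps of $E$ and a careful amortization'' without carrying it out. This is not a peripheral technicality: it is the whole content of the theorem, and the proposal explicitly defers it. Worse, it is not clear the envisioned plan can be salvaged. Each $B_n$ is a translate (by $-a_n$) of the $\eta_n$-erosion of $E^c$; as $n\to\infty$ the erosion parameter $\eta_n\to0$, so $B_n$ approaches a translate of $\mathrm{int}(E^c)$ and $\mathcal L(B_n)$ need not be small. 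The union $\bigcup_n(E^c-a_n)$ of translates of a positive-measure set by a null sequence can have measure far exceeding $\mathcal L(E^c)$ when $E^c$ is finely fragmented, and the summability hypothesis $\sum a_{n+1}/a_n<\delta$ does not obviously prevent this: the obstruction is about the geometry of $E^c$ at all scales simultaneously, not about the sizes of the shifts.

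The paper avoids the $\bigcup B_n$ estimate entirely and instead proceeds hierarchically. Lemma~4.3 builds integers $M_n$ so that $1/(M_1\cdots M_n)$ is pinned between $a_n/2$ and $a_n$ and $\sum 1/M_n<2\delta$. Lemma~4.2 is the key selection device: subdividing a block where $E$ has density $\ge t$ into $M$ equal pieces, one can always find an index $j$ so that $I_j$ retains density $\ge t-\varepsilon$ \emph{and} $I_{j+2}$ still meets $E$. Iterating produces nested intervals $\Delta_k$ (density degrades only by $\varepsilon_k$, with $\sum\varepsilon_k<t-1/2$) and companion intervals $\Delta_k'$ two slots over, from which $b_k\in E\cap\Delta_k'$ is chosen. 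Because $\Delta_{k+1}'\subset\Delta_k$ and $\Delta_k'$ sits exactly two subintervals of width $1/(M_1\cdots M_k)$ to the right of $\Delta_k$, one automatically gets $1/(M_1\cdots M_k)\le b_k-b_{k+1}\le 3/(M_1\cdots M_k)$, which Lemma~4.3 converts into the desired ratio bound. This multiplicative density bookkeeping is a genuinely different mechanism from your additive/translation estimate, and it is what makes the uniform constants attainable.
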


We remark that the requirement that the Lebesgue measure is uniformly bounded away from zero is necessary. Indeed, let $E = [0,\varepsilon_0]$. Clearly, any sequences $A = (a_n)_{n=1}^{\infty}$ converging to zero can be bi-Lipschitz embedded into $E$ via a map $f$. But then $|f(a_1)-f(0)|\le \varepsilon_0$ which means that the   bi-Lipschitz lower bound must be less than $\varepsilon_0/a_1$.  So there cannot be any uniform lower bound over all measurable sets of positive measures.

\medskip

The key to the proof of Theorem \ref{th:uniform-biLip} is the following lemma, which asserts that, for any measurable set  $E\subset [a,b]$ with sufficiently large density, there are two small subintervals of $[a,b]$ with pre-specified length and distance,  such that the restrictions of $E$ on these two intervals  satisfy certain density conditions.

\begin{lem}\label{lemma_Leb_estimate}
    Let $t\in(1/2,1)$ and let $0<\varepsilon<t-1/2$. Suppose that $M$ is an even integer such that $M>2/\varepsilon$. Then for  every interval $I = [a,b]$ and  every Lebesgue measurable set $E$ with ${\mathcal L}(E\cap I)\ge t\cdot {\mathcal L}(I)$, there exists $1\le j\le M-2$ such that
    \[ \frac{{\mathcal L}( E\cap I_j) }{{\mathcal L}(I_j)}\ge t-\varepsilon \quad\text{and}\quad   {\mathcal L}(E\cap I_{j+2})>0, \]
	where $I_j=\bigl[ a+\frac{j-1}{M}(b-a), a+\frac{j}{M}(b-a) \bigr]$.
\end{lem}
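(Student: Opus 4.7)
I would argue by contradiction. If the conclusion fails, then for every $j \in \{1, \ldots, M-2\}$ either $\mathcal{L}(E \cap I_j)/\mathcal{L}(I_j) < t - \varepsilon$ or $\mathcal{L}(E \cap I_{j+2}) = 0$. Introduce the two index sets
\[ A = \{ j : \mathcal{L}(E \cap I_j)/\mathcal{L}(I_j) \geq t - \varepsilon \}, \qquad B = \{ j : \mathcal{L}(E \cap I_j) > 0 \}, \]
both viewed as subsets of $\{1, \ldots, M\}$. The negation of the conclusion then reads: if $j \in A$ and $j \leq M-2$, then $j+2 \notin B$.

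Two bounds on $|A|$ and $|B|$ should next be extracted. For the combinatorial bound, set $A_0 = A \cap \{1, \ldots, M-2\}$ so that $A_0 + 2 \subseteq \{1,\ldots,M\} \setminus B$ and $|A_0| \geq |A| - 2$; this gives
\[ |B| \leq M - |A| + 2. \]
For the averaging bound, since $\sum_j \mathcal{L}(E \cap I_j) = \mathcal{L}(E \cap I) \geq t\mathcal{L}(I)$ and all the $I_j$ have equal length, the densities $p_j := \mathcal{L}(E \cap I_j)/\mathcal{L}(I_j)$ satisfy $\sum_j p_j \geq tM$; bounding $p_j \leq 1$ on $A$, $p_j < t-\varepsilon$ on $B \setminus A$, and $p_j = 0$ outside $B$ gives
\[ tM \leq |A| + (|B| - |A|)(t - \varepsilon). \]

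Substituting the combinatorial bound into the averaging bound and simplifying yields
\[ M\varepsilon - 2(t - \varepsilon) \leq |A|\,(1 - 2t + 2\varepsilon). \]
The hypotheses $\varepsilon < t - 1/2$ and $M > 2/\varepsilon$ then close the argument by contradiction: the right-hand side is nonpositive because $\varepsilon < t-1/2$ forces $1 - 2t + 2\varepsilon < 0$, while the left-hand side is strictly positive because $M\varepsilon > 2 > 2(t - \varepsilon)$ (using $t < 1$). The only delicate step, and therefore the principal place to be careful, is this final simplification: the coefficient of $|A|$ comes out negative (precisely thanks to the hypothesis $\varepsilon < t - 1/2$), so the direction of the inequality must be tracked carefully when $|B|$ is replaced by its combinatorial upper bound inside the averaging inequality. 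Once those signs are kept straight, the conclusion is immediate.
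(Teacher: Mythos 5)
Your proof is correct, and it takes a genuinely different route from the paper's. The paper first splits the indices into odd and even classes (this is where the evenness of $M$ is used) and restricts to the class carrying the larger share of $\sum p_j$, so that the shift by $2$ in the conclusion becomes a shift by $1$ within that class; it then runs a pairing argument, matching each $k$ with $p_{2k}\ge t-\varepsilon$ to its successor $k+1$ (for which $p_{2k+2}=0$ under the negation) and bounding the sum by counting pairs versus singletons. Your argument works directly with the shift of $2$: the cardinality bound $|B|\le M-|A|+2$ records that the translate $A_0+2$ lands in the complement of $B$, and the averaging inequality on $\sum p_j$ then closes the contradiction once $|B|$ is replaced by that bound (legitimate since the coefficient $t-\varepsilon$ of $|B|$ is positive, as you implicitly use). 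A small advantage of your route is that the evenness of $M$ is never invoked, so that hypothesis could be dropped; the paper's parity split keeps the final numerical step a line shorter. Both proofs use $\varepsilon<t-\tfrac12$ in the same role, to make the coefficient of $|A|$ (respectively of $\#\Lambda$) nonpositive, and $M>2/\varepsilon$ to make the remaining constant term strictly positive.
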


\begin{proof}
    Let $p_j = {\mathcal L( E\cap I_j) }/{{\mathcal L}(I_j)}$, then $p_j\in [0,1]$. Moreover,
    \[ \sum_{j=1}^M{\mathcal L}(E\cap I_j) = {\mathcal L}(E\cap I)\ge t\cdot {\mathcal L}(I). \]
	Splitting the sum into $j$ being odd or even, we must have
	$$
	\sum_{j=1}^{M/2} {\mathcal L}(E\cap I_{2j}) \ge \frac{t}{2}\cdot  {\mathcal L}(I)  \qquad \mbox{or}   \qquad \sum_{j=1}^{M/2} {\mathcal L}(E\cap I_{2j-1}) \ge \frac{t}{2}\cdot  {\mathcal L}(I).
	$$
	We may assume without loss of generality that the first case holds.  Since ${\mathcal L}(I_j) = \frac1{M}{\mathcal L}(I),$  it follows that
	\begin{equation}\label{eq_sum_p_j}
	  \sum_{j=1}^{M/2}p_{2j}  \ge \frac{M}{2}t.
	  \end{equation}
	Let $N = M/2$. We now claim that there exists $k\in \{1,2,..., N-1\}$ such that
	$$
	p_{2k}\ge t-\varepsilon \quad\text{and}\quad  \ p_{2k+2}>0.
	$$
	This will complete the proof. To justify the claim, we suppose on the contrary that the claim is false. Then for all $k\in \{1,...,N-1\},$ we have
$$
\mbox{ either }p_{2k}<t-\varepsilon, \quad \mbox{ or }\; p_{2k}\ge t-\varepsilon \mbox{ and }p_{2k+2} = 0.$$  We now define
	$$
	\Lambda_1 = \{k: p_{2k}\ge t-\varepsilon\}, \quad \Lambda_2 = \{k+1: k\in \Lambda_1\}.
	$$
	Then $\Lambda_1$ and~$\Lambda_2$ are disjoint subset of $\{1,\dots,N\}$.  Write also $\Lambda = \Lambda_1\cup\Lambda_2$.  Then $\#\Lambda_1 = \#\Lambda_2$ and hence $\#\Lambda = 2 \cdot \#\Lambda_1$. Here $\#$ denotes the cardinality of a set. Note that
	$$
	\begin{aligned}
	    \sum_{k=1}^Np_{2k} \le &  \left(\sum_{k\in \Lambda} p_{2k}\right) + \left(\sum_{k\in\{1,...,N-1\} \setminus\Lambda}p_{2k}\right)+p_{2N}\\
	    \le & \frac{1}{2} \cdot\#\Lambda + (t-\varepsilon) \cdot (N-\#\Lambda)+1\\
	    \le & (t-\varepsilon)N+1\\
< & tN,
	\end{aligned}
	$$
	 where in the last two inequalities, we have used the assumptions  $t-1/2>\varepsilon$ and $N = M/2>1/\varepsilon$.  But this contradicts (\ref{eq_sum_p_j}).
\end{proof}

We also need a preliminary lemma.
\begin{lem}\label{l:a2M}
	Let $(a_n)_{n=1}^\infty$ be a sequence of positive numbers such that
	\[ \delta:=a_1+\sum_{n=1}^{\infty}\frac{a_{n+1}}{a_n} \le\frac{1}{4}. \]
	Let
	\[ M_1=2\Bigl\lceil \frac{1}{2a_1} \Bigr\rceil \quad\text{and}\quad M_{n+1}=2\Bigl\lceil \frac{1}{2M_1\dotsm M_n a_{n+1}} \Bigr\rceil \]
	for all $n\ge1$, where $\lceil x \rceil$ denotes the smallest integer larger than $x$. Then
	\begin{equation}\label{eq:aM}
		\frac{a_n}{2}\le \frac{1}{M_1\dotsm M_n}\le a_n \quad\text{for all $n\ge1$},
	\end{equation}
	and
	\begin{equation}\label{eq:sumM}
		\sum_{n=1}^{\infty} \frac{1}{M_n}< 2\delta.
	\end{equation}
\end{lem}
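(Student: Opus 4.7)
\medskip

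\noindent\textbf{Proof plan for Lemma \ref{l:a2M}.} The first estimate \eqref{eq:aM} is a statement about products of ceilings, so I would prove it by induction on $n$, and then derive the tail bound \eqref{eq:sumM} as an essentially immediate consequence of the upper bound in \eqref{eq:aM}. The only ingredient from the hypotheses, beyond the defining recursion of the $M_n$, is that $\delta\le 1/4$, which already forces $a_1\le 1/4$ and $a_{n+1}/a_n\le 1/4$ for every $n\ge1$.

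For the base case of \eqref{eq:aM}, the definition $M_1=2\lceil 1/(2a_1)\rceil$ gives at once $M_1\ge 1/a_1$, hence $1/M_1\le a_1$. On the other hand $M_1\le 1/a_1+2$, so
\[ \frac{1}{M_1}\ge \frac{a_1}{1+2a_1}\ge \frac{a_1}{2}, \]
since $a_1\le 1/4$. For the inductive step, assume $a_n/2\le 1/(M_1\dotsm M_n)\le a_n$. The definition of $M_{n+1}$ yields $M_{n+1}\ge 1/(M_1\dotsm M_n a_{n+1})$, hence $1/(M_1\dotsm M_{n+1})\le a_{n+1}$ immediately. For the lower bound, using $M_{n+1}\le 1/(M_1\dotsm M_n a_{n+1})+2$ and the induction hypothesis $M_1\dotsm M_n\le 2/a_n$, I would compute
\[ M_1\dotsm M_{n+1}\le \frac{1}{a_{n+1}}+2M_1\dotsm M_n\le \frac{1}{a_{n+1}}+\frac{4}{a_n}. \]
The crucial observation is that $a_{n+1}/a_n\le \delta\le 1/4$, which gives $4/a_n\le 1/a_{n+1}$; therefore $M_1\dotsm M_{n+1}\le 2/a_{n+1}$, i.e.\ $1/(M_1\dotsm M_{n+1})\ge a_{n+1}/2$, completing the induction.

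For \eqref{eq:sumM}, the upper bound $M_1\dotsm M_n\le 2/a_n$ coming from \eqref{eq:aM} combined with $M_{n+1}\ge 1/(M_1\dotsm M_n a_{n+1})$ yields
\[ \frac{1}{M_{n+1}}\le M_1\dotsm M_n\,a_{n+1}\le \frac{2a_{n+1}}{a_n}. \]
Together with $1/M_1\le a_1$, this gives
\[ \sum_{n=1}^{\infty}\frac{1}{M_n}\le a_1+2\sum_{n=1}^{\infty}\frac{a_{n+1}}{a_n}<2a_1+2\sum_{n=1}^{\infty}\frac{a_{n+1}}{a_n}=2\delta, \]
where the strictness comes from $a_1>0$.

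The only delicate point is the inductive step for the lower bound in \eqref{eq:aM}, where one must squeeze the error term $2M_1\dotsm M_n$ produced by the ceiling against the main term $1/a_{n+1}$. This is where the summability assumption $\delta\le 1/4$ is used in its sharp form (through each single ratio $a_{n+1}/a_n\le 1/4$); everything else is bookkeeping with the recursion.
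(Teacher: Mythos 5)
Your proposal is correct and follows essentially the same route as the paper: induction on $n$ for \eqref{eq:aM} using the ceiling inequalities together with $a_1\le 1/4$ and $a_{n+1}/a_n\le 1/4$, then deriving \eqref{eq:sumM} by bounding $1/M_{n+1}\le 2a_{n+1}/a_n$ and summing. The only cosmetic differences are in the base case (you bound $1/M_1\ge a_1/(1+2a_1)$ rather than $M_1\le 2/a_1$, which are equivalent) and in the way the telescoping ratio is written for \eqref{eq:sumM}.
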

\begin{proof}
	We first prove~\eqref{eq:aM} by induction on~$n$. By the definition of~$M_1$,
	\[ M_1\ge \frac{2}{2a_1}=\frac{1}{a_1}. \]
	On the other hand,
	\[ M_1\le 2\cdot\left( \frac1{2a_1}+1 \right) = \frac{1}{a_1}+2\le\frac{2}{a_1}, \]
	since $1/a_1\ge4$. Thus \eqref{eq:aM} holds for $n=1$.
	
	Now suppose this is true for $n=k$.  {\color{red} By} the definition of $M_{k+1}$,
	\[ M_1\dotsm M_k\cdot M_{k+1}\ge M_1\dotsm M_k\cdot \frac{2}{2M_1\dotsm M_k a_{k+1}}=\frac{1}{a_{k+1}}. \]
	On the other hand,
	\[ \begin{split}
		M_1\dotsm M_k\cdot M_{k+1} &\le M_1\dotsm M_k\cdot 2\cdot \bigl( (2M_1\dotsm M_k a_{k+1})^{-1} +1 \bigr) \\
		&= \frac{1}{a_{k+1}} + 2M_1\dotsm M_k \\
		&\le \frac{1}{a_{k+1}} + \frac{4}{a_k} \qquad(\text{by induction hypothesis}) \\
		&\le \frac{1}{a_{k+1}} + \frac{a_k}{a_{k+1}}\cdot\frac{1}{a_k} \qquad(\text{since } a_k/a_{k+1}\ge4 ) \\
& = \frac{2}{a_{k+1}}.
	\end{split} \]
 Hence, \eqref{eq:aM} holds for $n=k+1$. This completes the proof of \eqref{eq:aM}.
	
	Finally, we deduce~\eqref{eq:sumM} from~\eqref{eq:aM}. Indeed, by~\eqref{eq:aM},
	\[ \begin{split}
		\sum_{n=1}^{\infty}\frac{1}{M_n} &=\frac{1}{M_1} + \frac{M_1}{M_1M_2} +\dots+\frac{M_1\dotsm M_{n-1}}{M_1\dotsm M_n} +\dotsb \\
		&\le a_1 + \frac{2a_2}{a_1} +\dots+ \frac{2a_n}{a_{n-1}} + \dotsb\\
& < 2\delta. \qedhere
	\end{split} \]
\end{proof}

\begin{proof}[Proof of Theorem \ref{th:uniform-biLip}]
	Let $(a_n)_{n=1}^\infty$ be a sequence of positive numbers satisfying~\eqref{eq_sum_condition}. Let
	\[ M_1=2\Bigl\lceil \frac{1}{2a_1} \Bigr\rceil \quad\text{and}\quad M_{n+1}=2\Bigl\lceil \frac{1}{2M_1\dotsm M_n a_{n+1}} \Bigr\rceil \]
	for all $n\ge1$. By Lemma~\ref{l:a2M}, the sequence $(M_n)_{n=1}^\infty$ satisfies~\eqref{eq:aM} and~\eqref{eq:sumM}.

	For a given measurable set $E\subset[0,1]$ with ${\mathcal L}(E)>\frac{1}{2}+4\delta$, pick $\eta>0$ such that
	\[ {\mathcal L}(E)>\frac12+ 2(2+\eta)\delta. \]
	Define $\varepsilon_n = (2+\eta)/M_n$. Then $M_n>2/\varepsilon_n$. Let $t = 1/2+ (2+\eta)\cdot 2\delta$. By~\eqref{eq:sumM}, we have
	\begin{equation}\label{eq:t-eps}
		\sum_{i=1}^{\infty}\varepsilon_i< t- 1/2.
	\end{equation}
	
	Let $\Delta_0 = [0,1]$. We now construct inductively two sequence $\{\Delta_k\}_{k=1}^{\infty}$ and $\{\Delta_k'\}_{k=1}^{\infty}$ of intervals such that the following properties hold for all $k\ge 1$:
	\begin{enumerate}[(i)]
		\item \label{en:41i}$\Delta_{k-1}\supset \Delta_{k}\cup \Delta_{k}'$;
		\item \label{en:41ii}for some integer $1\le j_k\le M_1\dotsm M_k-2$,
		\[ \Delta_k=\frac{1}{M_1\dotsm M_k}\cdot[j_k-1,j_k] \quad\text{and}\quad \Delta_k'=\frac{1}{M_1\dotsm M_k}\cdot[j_k+1,j_k+2]; \]
		\item \label{en:41iii} $\displaystyle \frac{\mathcal{L}(E\cap\Delta_k)}{\mathcal{L}(\Delta_k)}\ge t-\sum_{i=1}^{k}\varepsilon_i$ and $\displaystyle \mathcal{L}(E\cap\Delta_k')>0$.
	\end{enumerate}
	
	To see this, we first apply Lemma \ref{lemma_Leb_estimate} on $\Delta_0$ to obtain $1\le j_1\le M_1-2$ such that
	\[ \frac{{\mathcal L}(E\cap\Delta_1)}{{\mathcal L}(\Delta_1)}\ge t-\varepsilon_1 \quad\text{and}\quad
	{\mathcal L}\left(E\cap \Delta_1'\right)>0, \]
	where $\Delta_1 =\left[\frac{j_1-1}{M_1}, \frac{j_1}{M_1}\right]$ and $\Delta_1' = \left[\frac{j_1+1}{M_1}, \frac{j_1+2}{M_1}\right]$. Clearly, all properties hold for $k=1$ with $\Delta_1$ and $\Delta_1'$.
	
	Suppose that for $k\ge 1$, we have chosen $\Delta_k$ and $\Delta_k'$ with all the properties hold.
	We subdivide $\Delta_k$ into $M_{k+1}$ intervals. By the fact $M_{k+1}\ge 2/\varepsilon_{k+1}$ and~\eqref{eq:t-eps}, we can apply Lemma \ref{lemma_Leb_estimate} to obtain $\Delta_{k+1}$ and $\Delta_{k+1}'$ inside $\Delta_k$ such that Property~\eqref{en:41iii} holds. Property~\eqref{en:41i} holds by our construction. Furthermore, Property~\eqref{en:41ii} also holds by our choice of $\Delta_{k+1}$ and $\Delta'_{k+1}$ in Lemma \ref{lemma_Leb_estimate}.
	
	\medskip
	
	By the second part of Property~\eqref{en:41iii},  we can pick $b_k\in E\cap \Delta_{k}'$ for each $k\ge 1$. Then $b_{k+1}\in \Delta_{k+1}'\subset \Delta_k$ by Property~\eqref{en:41i}. By Property~\eqref{en:41ii}, we have
	\[ \frac{1}{M_1\cdots M_k}\le b_{k}-b_{k+1}\le \frac{3}{M_1\cdots M_k}. \]
	Combining with~\eqref{eq:aM}, we have
	\[ \frac12 \le \frac{b_{k}-b_{k+1}}{a_k-a_{k+1}}\le \frac{3a_k}{a_k-\delta a_k} = \frac{3}{1-\delta}. \]
	By linearly interpolating all points between $(a_k,b_k)$, we obtain a bi-Lipschitz map $f$ with bi-Lipschitz constants $1/2$ and $3/(1-\delta)$ and $f(A)\subset E $. This completes the proof.
\end{proof}

\medskip


We are now ready to prove Theorem \ref{th:uniform} in the introduction.

\begin{proof}[Proof of Theorem~\ref{th:uniform}]

Let $A = (a_n)_{n=1}^{\infty}$ be a sequence of positive numbers such that
\[ \delta:=a_1+\sum_{n=1}^{\infty}\frac{a_{n+1}}{a_n}<\frac{1}{8}. \]
Let $E$ be a measurable set of positive measure. Replacing $E$ by a suitable translation of itself, we may assume that $0$ is a Lebesgue density point of $E$. By the Lebesgue density theorem, there exists $N\in\N$ such that for all $n>N$,
\[ \frac{{\mathcal L}(E\cap [3^{-n}, 2\cdot 3^{-n}])}{3^{-n}} > \frac{1}{2}+4\delta. \]

Let $g_n(x) = 3^nx-1$ and consider
\[ E_n = g_n (E\cap [3^{-n}, 2\cdot 3^{-n}]). \]
Then $E_n\subset [0,1]$ and ${\mathcal L}(E_n) >\frac{1}{2}+4\delta$ for all $n>N$. Using Theorem \ref{th:uniform-biLip}, for each $n>N$, we can find a bi-Lipschitz map $f_n\colon\R\to\R$ whose bi-Lipschitz lower and upper bounds are   $1/2$ and $3/(1-\delta)$ respectively and $f_n (A)\subset E_n$. Let $h_n = g_n^{-1}\circ f_n \circ g_n$. A direct check shows that $h_n$ is also bi-Lipschitz on $\R$ with bi-Lipschitz lower and upper bounds $1/2$ and $3/(1-\delta)$ respectively. Moreover,
\[ h_n \left(3^{-n} (1+A)\right)\subset E\cap [3^{-n}, 2\cdot 3^{-n}]. \]
Define the map $h:\R\to\R$ by
\[ h(x)=h_n(x) \quad\text{if $x\in [3^{-n},3^{-n}(1+a_1)]$ for some $n>N$}. \]
Then we extend $h$ continuously by a linear function on  each  of the intervals in the complement, i.e., $h$ is a straight line  on each  interval $[3^{-n-1}(1+a_1), 3^{-n}]$  with $n>N$ and on the unbounded intervals, we simply define it with a straight line of slope~$1$.

\medskip

We first claim that $h$ is a bi-Lipschitz function. Indeed, $h_n$ are all bi-Lipschitz with uniform bounds $1/2$ and $3/(1-\delta)$. It suffices to show that the  slopes of $h$ on the intervals $[3^{-n-1}(1+a_1), 3^{-n}]$ are uniformly bounded. To see this, because $h_n (3^{-n}(1+A))\subset E\cap [3^{-n},2\cdot 3^{-n}]$,
\[ h_n(3^{-n})\in [3^{-n}, 2\cdot3^{-n}], \quad  h_n(3^{-n-1}(1+a_1))\in  [3^{-n-1}, 2\cdot3^{-n-1}]. \]
the  slope of $h$ on $[3^{-n-1}(1+a_1), 3^{-n}]$ is therefore lying in the interval
\[ \left[\frac{3^{-n}-2\cdot 3^{-n-1}}{3^{-n-1}(2-a_1)},\; \frac{2\cdot3^{-n}- 3^{-n-1}}{3^{-n-1}(2-a_1)}\right] =\left[\frac{1}{2-a_1}, \; \frac{5}{2-a_1}\right].  \]
Hence, $h$ is a bi-Lipschitz function with bi-Lipschitz lower bound
\[ \min \Bigl( \frac{1}{2}, \frac{1}{2-a_1} \Bigr)=\frac{1}{2} \]
and upper bound
\[ \max\Bigl( \frac{3}{1-\delta}, \frac{5}{2-a_1} \Bigr)=\frac{3}{1-\delta}, \]
since $0<a_1<\delta<1/8$.

\medskip

\begin{figure}[h]

\begin{tikzpicture}[x=0.75pt,y=0.75pt,yscale=-1,xscale=1]

\draw    (219,1) -- (220,292) ;
\draw    (199,270) -- (586,269) ;
\draw    (470,259) -- (470,279) ;
\draw    (470,259) -- (479,259) ;
\draw    (517,280) -- (526,280) ;
\draw    (477,279) -- (470,279) ;
\draw    (517,261) -- (525,261) ;
\draw    (525,261) -- (525,280) ;
\draw    (342,260) -- (352,260) ;
\draw    (351,279) -- (342,279) ;
\draw    (342,260) -- (342,279) ;
\draw    (361,260) -- (371,260) ;
\draw    (371,280) -- (364,280) ;
\draw    (371,260) -- (371,280) ;
\draw [color={rgb, 255:red, 227; green, 17; blue, 17 }  ,draw opacity=1 ]   (471,155) -- (368,229) ;
\draw    (368,229) -- (357,242) ;
\draw    (357,242) -- (347,247) ;
\draw    (476,151) -- (484,148) ;
\draw    (476,151) -- (471,155) ;
\draw    (484,148) -- (489,138) ;
\draw    (212,252) -- (228,252) ;
\draw    (211,214) -- (227,214) ;
\draw    (211,160) -- (230,160) ;
\draw    (212,127) -- (228,127) ;
\draw  [dash pattern={on 0.84pt off 2.51pt}]  (198,160) -- (540,159) ;
\draw  [dash pattern={on 0.84pt off 2.51pt}]  (212,127) -- (543,127) ;
\draw  [dash pattern={on 0.84pt off 2.51pt}]  (185,252) -- (527,252) ;
\draw  [dash pattern={on 0.84pt off 2.51pt}]  (205,215) -- (532,213) ;

\draw (462,280.4) node [anchor=north west][inner sep=0.75pt]    {$3^{-n}$};
\draw (502,280.4) node [anchor=north west][inner sep=0.75pt]    {$3^{-n}( 1+a_{1})$};
\draw (353,280.4) node [anchor=north west][inner sep=0.75pt]    {$3^{-n-1}( 1+a_{1})$};
\draw (304,280.4) node [anchor=north west][inner sep=0.75pt]    {$3^{-n-1}$};
\draw (265,54.4) node [anchor=north west][inner sep=0.75pt]    {$s=\ \dfrac{h_{n}( 3^{-n}) -h_{n+1}( 3^{-n-1}( 1+a_{1}))}{3^{-n} -3^{-n-1}( 1+a_{1})}$};
\draw (164,242.4) node [anchor=north west][inner sep=0.75pt]    {$3^{-n-1}$};
\draw (152,205.4) node [anchor=north west][inner sep=0.75pt]    {$2\cdot 3^{-n-1}$};
\draw (165,146.4) node [anchor=north west][inner sep=0.75pt]    {$3^{-n}$};
\draw (152,114.4) node [anchor=north west][inner sep=0.75pt]    {$2\cdot 3^{-n}$};
\draw (221,4.4) node [anchor=north west][inner sep=0.75pt]    {$\ h( x)$};
\draw (590,265.4) node [anchor=north west][inner sep=0.75pt]    {$x$};

\end{tikzpicture}

 \caption{An illustration of $h(x)$ for $3^{-n-1}\le x\le 3^{-n}(1+a_1)$ with slope in $[3^{-n-1}(1+a_1), 3^{-n}]$ equal to $s$. }
    \label{fig:slop}
\end{figure}
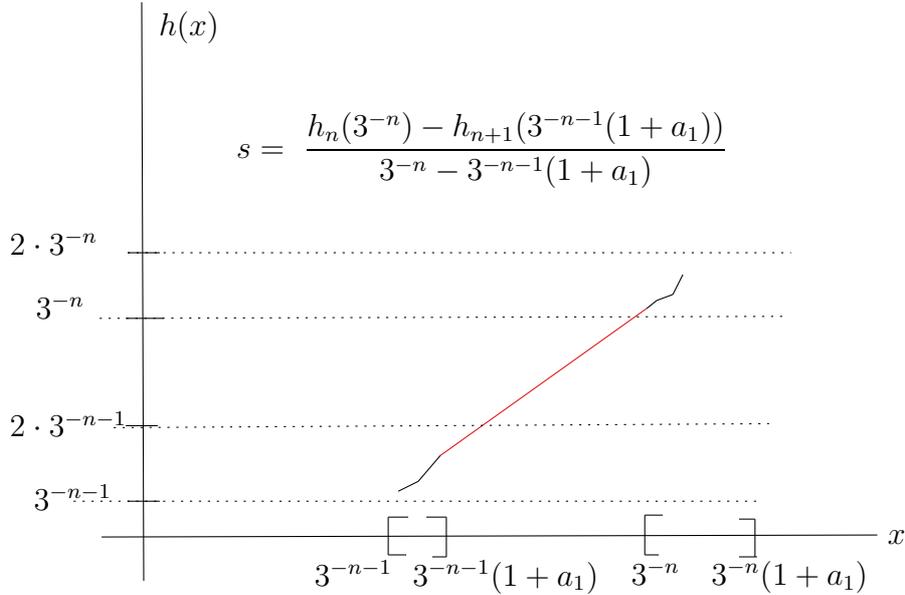

By our construction of $h$, $h \left(\bigcup_{n>N}3^{-n}(1+A)\right)\subset E$. We now consider the map $H(x) = h(3^{-N}x)$. Clearly, $H$ is still a bi-Lipschitz map and
\[
H \biggl( \bigcup_{n=1}^{\infty}3^{-n}(1+A) \biggr) = h\biggl( \bigcup_{n>N}3^{-n}(1+A) \biggr) \subset E.
\]
This completes the proof.
\end{proof}

{\noindent \bf Acknowledgements}.
 The authors would like to thank Xiong Jin  for proposing the problem addressed in this paper and some helpful discussions.  Feng was partially supported by the General Research Fund grant (project CUHK14305722) from the
Hong Kong Research Grant Council, and by a direct grant for research from the Chinese University
of Hong Kong. Xiong
was partially supported by NSFC grant 12271175 and 11871227.

\end{document}